\newif\iffinal
\newcommand{\TODO}[1]{}
\renewcommand{\gitMark}{\jobname\,\textbullet{}\,\gitFirstTagDescribe\,\textbullet{}\,\gitAuthorName,\,\gitAuthorIsoDate}
\newcommand{\TODO}[1]
{\par\fbox{\begin{minipage}{0.9\linewidth}\textbf{TODO:} #1\end{minipage}}\par}
\DeclarePairedDelimiter{\abs}{\lvert}{\rvert}
\newcommand{\C}{\mathbb{C}}
\newcommand{\calF}{\mathcal{F}}
\newcommand{\calM}{\mathcal{M}}
\newcommand{\DLMF}[2]{\cite[\href{http://dlmf.nist.gov/#1.E#2}{#1.#2}]{NIST:DLMF:v1.0.10}}
\newcommand{\E}{\mathbb{E}}
\DeclareMathOperator{\Li}{Li}
\renewcommand{\MR}[1]{}
\newcommand{\phibar}{\overline\phi}
\renewcommand{\P}{\mathbb{P}}
\DeclareMathOperator{\Res}{Res}
\newcommand{\Z}{\mathbb{Z}}
\newtheorem*{theoremnn}{Theorem}
\newtheorem{theorem}{Theorem}
\newtheorem{lemma}{Lemma}
\theoremstyle{remark}
\newtheorem*{remark}{Remark}
\author{Clemens Heuberger}
\address[Clemens Heuberger]{Institut f\"ur Mathematik, Alpen-Adria-Uni\-ver\-si\-t\"at Klagenfurt,
  Universit\"atsstra\ss e 65--67, 9020 Klagenfurt, Austria}
\email{clemens.heuberger@aau.at}
\thanks{C.~Heuberger is supported by the Austrian Science Fund (FWF):
  P~24644-N26. Parts of this paper have been written while he was a visitor at Stellenbosch University.}
\author{Stephan Wagner}
\address[Stephan Wagner]{Department of Mathematical Sciences, Stellenbosch University, 7602 Stellenbosch,
 South Africa}
\email{swagner@sun.ac.za}
\thanks{S.~Wagner is supported by the National Research Foundation of South Africa, grant number 96236.}
\title{On the monoid generated by a Lucas sequence}
\subjclass[2010]{11N37; 
11B39
}
\keywords{Lucas number; primitive divisor; number of prime factors; asymptotics;
  limiting distribution}
\begin{document}

\begin{abstract}
A Lucas sequence is a sequence of the general form $v_n = (\phi^n - \phibar^n)/(\phi-\phibar)$, where $\phi$ and $\phibar$ are real algebraic integers such that $\phi+\phibar$ and $\phi\phibar$ are both rational. Famous examples include the Fibonacci numbers, the Pell numbers, and the Mersenne numbers. We study the monoid that is generated by such a sequence; as it turns out, it is almost freely generated. We provide an asymptotic formula for the number of positive integers $\leq x$ in this monoid, and also prove Erd\H{o}s-Kac type theorems for the distribution of the number of factors, with and without multiplicity. While the limiting distribution is Gaussian if only distinct factors are counted, this is no longer the case when multiplicities are taken into account.
\end{abstract}
\maketitle

\section{Introduction}
Let $\phi$ and $\phibar$ be real algebraic integers such that $\phi+\phibar$ and
$\phi\phibar$ are fixed non-zero coprime rational integers with $\phi>\abs{\phibar}$.
The \emph{Lucas numbers} associated with $(\phi, \phibar)$ are 
\begin{equation*}
  v_n=v_n(\phi,\phibar)=\frac{\phi^n-\phibar^n}{\phi-\phibar}, \qquad
  n=1, 2, 3, \ldots.
\end{equation*}
All these numbers are positive integers, and the sequence is strictly increasing for $n \geq 2$. Famous examples include the Fibonacci numbers ($\phi = \frac{1+\sqrt{5}}{2}$, $\phibar = \frac{1-\sqrt{5}}{2}$), the Pell numbers ($\phi = 1+\sqrt{2}$, $\phibar = 1-\sqrt{2}$), and the Mersenne numbers ($\phi=2$, $\phibar=1$). The multiplicative group generated by such a sequence was studied in a recent paper by Luca et al.~\cite{Luca-Pomerance-Wagner:2011:fibon}; by definition, it consists of all quotients of products of elements of the sequence. In~\cite{Luca-Pomerance-Wagner:2011:fibon}, a near-asymptotic formula for the number of integers in this group was given (in the specific case of the Fibonacci sequence, but the method is more general). This continued earlier work of Luca and Porubsk\'y \cite{Luca-Porubsky:2003:multiplicative}, who considered the multiplicative group generated by a Lehmer sequence and showed that the number of integers below $x$ in the resulting group is $O(x/(\log x)^{\delta})$ for any positive number $\delta$.

As it turns out, the group that is generated by a Lucas sequence is almost a free group; this is due to the existence of \emph{primitive divisors}.
A prime number $p$ is a primitive divisor of $v_n(\phi,\phibar)$ if $p$ divides $v_n$ but does not divide $v_1\ldots v_{n-1}$. It is a classical result, due to Carmichael, that almost all elements of a Lucas sequence have primitive divisors:

\begin{theoremnn}[{Carmichael~\cite[Theorem~XXIII]{Carmichael:1913:numer-factor}}]
  If $n\notin\{1, 2, 6, 12\}$, then $v_n(\phi,\phibar)$ has a primitive divisor.
\end{theoremnn}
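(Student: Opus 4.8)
The plan is to use the classical ``cyclotomic factorisation'' of $\phi^n-\phibar^n$. For $n\ge 1$ set
\[
  \Phi_n(\phi,\phibar)=\prod_{d\mid n}\bigl(\phi^{n/d}-\phibar^{n/d}\bigr)^{\mu(d)}.
\]
Because $\phi+\phibar$ and $\phi\phibar$ are rational integers, $\Phi_n(\phi,\phibar)\in\Z$ for $n\ge 2$ — it is the homogenisation of the $n$-th cyclotomic polynomial evaluated at $(\phi,\phibar)$, which is symmetric in $\phi$ and $\phibar$ — and Möbius inversion gives $\phi^n-\phibar^n=\prod_{d\mid n}\Phi_d(\phi,\phibar)$; since $\Phi_1(\phi,\phibar)=\phi-\phibar$ this yields $v_n=\prod_{d\mid n,\ d>1}\Phi_d(\phi,\phibar)$. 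The whole proof rests on showing that $\Phi_n(\phi,\phibar)$ is, up to a tiny ``intrinsic'' correction, built precisely from the primitive prime divisors of $v_n$, so that one just needs it to be not too small.

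First I would pin down the divisibility structure. For a prime $p\nmid\phi\phibar$, let $\rho(p)$ be its rank of apparition (the least $m$ with $p\mid v_m$); reducing the recurrence $v_{n+1}=(\phi+\phibar)v_n-\phi\phibar\,v_{n-1}$ modulo $p$ gives $p\mid v_m\iff\rho(p)\mid m$, and a Fermat-type argument shows $\rho(p)\mid p-\bigl(\tfrac{D}{p}\bigr)$ with $D=(\phi-\phibar)^2$, so in particular $p\nmid\rho(p)$. A Lifting--the--Exponent computation then gives, for odd $p$ with $\rho(p)\mid n$, that $\nu_p(v_n)=\nu_p(v_{\rho(p)})+\nu_p\bigl(n/\rho(p)\bigr)$ ($p=2$ requiring a slightly weaker variant), which in terms of the $\Phi_d$ says: $\nu_p\bigl(\Phi_n(\phi,\phibar)\bigr)$ equals $\nu_p(v_{\rho(p)})$ if $n=\rho(p)$, equals $1$ if $n=\rho(p)p^{a}$ with $a\ge 1$, and equals $0$ otherwise. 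Two consequences follow: every prime dividing $\Phi_n(\phi,\phibar)$ but not $n$ has $\rho(p)=n$, i.e.\ is a primitive divisor of $v_n$; and the ``intrinsic part'' $I_n$ of $\Phi_n(\phi,\phibar)$ — its largest divisor composed of primes dividing $n$ — has every prime factor equal to the largest prime factor of $n$ (except in a few small configurations) and to the first power apart from $p=2$, so that $I_n=O(n)$, say $I_n\le 4n$. Hence $v_n$ has a primitive divisor whenever $\abs{\Phi_n(\phi,\phibar)}>I_n$.

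For the lower bound, note that $\phi\ge(1+\sqrt 5)/2$: from $\phi^{2}>\phi\abs{\phibar}=\abs{\phi\phibar}\ge 1$ we get $\phi>1$, and a short check rules out all admissible $(\phi,\phibar)$ with $1<\phi<(1+\sqrt 5)/2$; moreover $r:=\abs{\phibar}/\phi\le 1-1/\phi<1$, since $\phi-\abs{\phibar}$ equals $\phi+\phibar\in\Z_{\ge 1}$ or $\sqrt{D}\ge 1$. Now
\[
  \abs{\Phi_n(\phi,\phibar)}=\phi^{\varphi(n)}\prod_{d\mid n}\bigl|1-(\phibar/\phi)^{n/d}\bigr|^{\mu(d)},
\]
and since $1/\phi\le 1-r\le\bigl|1-(\phibar/\phi)^{n/d}\bigr|\le 1+r<2$ with $2^{\omega(n)-1}$ factors of each sign (for $n\ge 2$), we get $\abs{\Phi_n(\phi,\phibar)}\ge\phi^{\varphi(n)}(2\phi)^{-2^{\omega(n)-1}}$. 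As $2^{\omega(n)-1}=n^{o(1)}$ while $\varphi(n)\gg n/\log\log n$, the exponent $\varphi(n)-2^{\omega(n)-1}$ exceeds $\tfrac12\varphi(n)$ once $n$ is past an explicit bound $N_0$, so $\abs{\Phi_n(\phi,\phibar)}$ grows exponentially in $\varphi(n)$ and comfortably dominates $I_n=O(n)$ there.

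There remain the $n\le N_0$. For each such $n$, the inequality $\abs{\Phi_n(\phi,\phibar)}>I_n$ can fail only when $\phi$ — hence $v_n$ — is bounded, so only finitely many pairs $(\phi,\phibar)$ survive and are checked directly; this is where one confirms that the genuine exceptions are exactly $n\in\{1,2,6,12\}$ (here $v_1=1$ has no prime factor; $v_2=\phi+\phibar$ may equal $\pm1$; and for $n=6,12$ the intrinsic part can swallow all of $\Phi_n(\phi,\phibar)$, e.g.\ $\Phi_6=4$ and $\Phi_{12}=6$ for the Fibonacci sequence, where $\rho(2)=3$ and $\rho(3)=4=2^{2}$). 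I expect this last step to be the real obstacle: the clean estimate above is vacuous precisely for the small $n$ one must treat, so cutting the exceptional set down to those four values needs the finer arithmetic of $\Phi_n$ for small $n$ together with a somewhat delicate case analysis over the few remaining quadratics — and it is exactly here that the hypothesis that $\phi$ and $\phibar$ be \emph{real} (so $D>0$ and $r<1$) is essential to keep the list this short.
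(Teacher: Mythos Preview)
The paper does not prove this theorem; it is quoted as a classical result with a citation to Carmichael's 1913 paper (and a pointer to Bilu--Hanrot--Voutier for the general case), and is then used as a black box to justify that $\calF$ omits only finitely many $v_n$. There is therefore no ``paper's own proof'' to compare against.

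That said, your outline is essentially Carmichael's original argument and is sound in its structure: the cyclotomic factorisation $v_n=\prod_{d\mid n,\,d>1}\Phi_d(\phi,\phibar)$, the rank-of-apparition and lifting-the-exponent analysis showing that every non-primitive prime in $\Phi_n$ divides $n$ and contributes at most $O(n)$ in total, and the growth bound $\abs{\Phi_n(\phi,\phibar)}\ge \phi^{\varphi(n)}(2\phi)^{-2^{\omega(n)-1}}$ which dominates any polynomial in $n$ once $n$ is large. You are also right that the real work hides in the finite verification: to pin the exceptional set down to exactly $\{1,2,6,12\}$ one needs tighter control of the intrinsic part (in particular the $p=2$ case and the possibility $\rho(p)=n/p$) than the soft $I_n\le 4n$, together with a careful enumeration of the admissible real quadratic pairs $(\phi,\phibar)$ with small $\phi$. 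Carmichael carries this out explicitly; Bilu--Hanrot--Voutier give the definitive modern treatment (including the complex case, where the exceptional list is longer).
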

See also Bilu, Hanrot and
Voutier~\cite{Bilu-Hanrot-Voutier:2001:exist-lucas-lehmer}. Note the slightly
different definition in \cite{Bilu-Hanrot-Voutier:2001:exist-lucas-lehmer}
which does not allow a primitive divisor to divide $(\phi-\phibar)^2$.

Let now
\begin{equation*}
  \calF=\{v_n(\phi, \phibar) \mid v_n(\phi, \phibar)\text{ has a primitive divisor}\}.
\end{equation*}
By Carmichael's theorem, $\calF$ includes all but finitely many $v_n$. We let $\calF_0$ be the set of all $v_n(\phi,\phibar)$ with $n \leq 12$ that have a primitive divisor, so that
\begin{equation*}
  \calF=\calF_0 \cup \{v_n(\phi,\phibar)
  \mid n\ge 13\}.
\end{equation*}
For example, in the case of the golden ratio $\phi=(1+\sqrt{5})/2$,
$\phibar=(1-\sqrt{5})/2$, we have
\begin{equation*}
  \calF_0=\{2, 3, 5, 13, 21, 34, 55, 89\}
\end{equation*}
and
\begin{equation*}
  \calF=\{2, 3, 5, 13, 21, 34, 55, 89, 233, 377, \ldots\},
\end{equation*}
which are all the Fibonacci numbers except $1$, $8$ and $144$.

Instead of the full group that was studied in~\cite{Luca-Porubsky:2003:multiplicative} and \cite{Luca-Pomerance-Wagner:2011:fibon}, we consider the free monoid
\begin{equation*}
\calM(\calF)=\{m_1\ldots m_k \mid k\ge 0, m_j\in\calF\}
\end{equation*}
generated by $\calF$. It is an easy consequence of the existence of primitive
divisors that every element of $\calM(\calF)$ has a \emph{unique} factorisation into elements of
$\calF$: If an element has two factorisations, consider a primitive divisor of
the largest element of $\calF$ occurring in either of those factorisations. This
prime number has to occur in all factorisations, so the largest element of
$\calF$ occurring in any factorisation of the element is fixed. The result
follows by induction.

To give a concrete example, the elements of our monoid are
\begin{equation*}
\calM(\calF)=\{1,2,3,4,5,6,8,9,10,12,13,15,16,18,20,21,24,25,26,27,30,32,34,\ldots\}
\end{equation*}
in the case of the Fibonacci numbers (cf. \cite[A065108]{OEIS:2016}).

Our first result describes the asymptotic number of elements in $\calM(\calF)$ up
to a given bound, paralleling the aforementioned result of \cite{Luca-Pomerance-Wagner:2011:fibon}, but even being more precise. All constants, implicit constants in $O$-terms and the
Vinogradov notation will depend on $\phi$ and $\phibar$.

\begin{theorem}\label{theorem:count}
  We have
  \begin{equation*}
    \abs{\calM(\calF) \cap [1, x]} = k_0 (\log x)^{k_1}
    \exp\biggl(\pi\sqrt{\frac{2\log x}{3\log\phi}}\biggr)\Bigl(1+O\Bigl(\frac1{(\log x)^{1/10}}\Bigr)\Bigr)
  \end{equation*}
  for $x\to \infty$ and suitable constants $k_0$ and $k_1$. Specifically,
\begin{equation*}
k_1 = \frac{\abs{\calF_0} - 13}{2} + \frac{\log(\phi - \phibar)}{2\log \phi}.
\end{equation*}
\end{theorem}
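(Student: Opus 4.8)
The plan is to exploit the unique factorisation established above. Since $\calM(\calF)$ is the free monoid on $\calF$, the Dirichlet series
\begin{equation*}
  F(s):=\sum_{y\in\calM(\calF)}y^{-s}=\prod_{m\in\calF}\frac1{1-m^{-s}}
\end{equation*}
converges and is holomorphic for $\Re s>0$; the line $\Re s=0$ is its natural boundary (the poles of the factors are dense there), while $\log F(s)$ blows up like $\pi^2/(6s\log\phi)$ as $s\to0^+$. By Perron's formula,
\begin{equation*}
  \abs{\calM(\calF)\cap[1,x]}=\frac1{2\pi i}\int_{(\sigma)}F(s)\,\frac{x^s}{s}\,ds
\end{equation*}
for any $\sigma>0$, and this integral will be evaluated by the saddle-point method — equivalently a circle-method analysis — along a line $\Re s=\sigma=s_0$ with $s_0\to0^+$ the (real) saddle. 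Since the contour cannot be pushed past $s=0$, the whole argument stays on a vertical line to the right of the singularity.

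First I would pin down $F(s)$ as $s\to0^+$. Writing $\log F(s)=\sum_{k\ge1}\frac1k\sum_{m\in\calF}m^{-ks}$ and splitting $\calF=\calF_0\cup\{v_n:n\ge13\}$, the identity $v_n=\phi^n(\phi-\phibar)^{-1}\bigl(1-(\phibar/\phi)^n\bigr)$ together with $\sum_{n\ge13}\phi^{-ns}=\phi^{-13s}/(1-\phi^{-s})$ gives the elementary expansion
\begin{equation*}
  \sum_{m\in\calF}m^{-s}=\frac1{s\log\phi}+\beta+O(s),\qquad
  \beta=\abs{\calF_0}-\tfrac{25}2+\frac{\log(\phi-\phibar)}{\log\phi},
\end{equation*}
where $-25/2$ comes from the Taylor expansion of $\phi^{-13s}/(1-\phi^{-s})$, the term $\log(\phi-\phibar)/\log\phi$ from the factor $(\phi-\phibar)^s$, and $\abs{\calF_0}$ from the finitely many summands over $\calF_0$. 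Subtracting $1/(ks\log\phi)$ from each inner sum and integrating the Mellin transform of the remaining piece against $\zeta(z+1)$ — whose simple pole at $z=0$ meets the simple pole of that Mellin transform to form a double pole — then gives
\begin{equation*}
  \log F(s)=\frac{\pi^2}{6\log\phi}\cdot\frac1s-\beta\log s+C+O(s^{\delta})
\end{equation*}
for suitable constants $C$ and $\delta>0$: here $\pi^2/6=\sum_k k^{-2}$ produces the leading coefficient, and $-\beta\log s$ is the residue of the double pole.

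Next comes the saddle-point evaluation. With $\Psi(s)=\log F(s)+s\log x-\log s$, the saddle $s_0$ solves $\Psi'(s_0)=0$, so $s_0\sim\sqrt{\pi^2/(6\log\phi\cdot\log x)}$, and consequently
\begin{equation*}
  \Psi(s_0)=\pi\sqrt{\frac{2\log x}{3\log\phi}}+\frac{\beta+1}2\log\log x+O(1),\qquad \Psi''(s_0)\asymp(\log x)^{3/2}.
\end{equation*}
Deforming the Perron contour to $\Re s=s_0$ and restricting to the central arc $\abs{\Im s}\le(\log x)^{-3/4+\varepsilon}$, a Gaussian approximation (with a power-of-$\log x$ error relative to the main term, stemming from the higher Taylor coefficients of $\Psi$ at $s_0$) yields
\begin{equation*}
  \abs{\calM(\calF)\cap[1,x]}=\frac{e^{\Psi(s_0)}}{\sqrt{2\pi\,\Psi''(s_0)}}\Bigl(1+O\bigl((\log x)^{-1/10}\bigr)\Bigr),
\end{equation*}
and since the $\log x$-exponent here is $\frac{\beta+1}2-\frac34=\frac\beta2-\frac14$, inserting the value of $\beta$ gives exactly $k_1=\frac{\abs{\calF_0}-13}2+\frac{\log(\phi-\phibar)}{2\log\phi}$, while $k_0$ absorbs $e^{C}$, the factor $(\pi^2/(6\log\phi))^{1/4}/(2\sqrt\pi)$, and the implied $O(1)$ in $\Psi(s_0)$.

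The hard part will be bounding the integral over the complement of the central arc — the minor arcs. For $\abs t$ bounded away from the resonances $t\approx2\pi j/\log\phi$ ($j\in\Z$) one needs $\abs{F(s_0+it)}\ll F(s_0)(\log x)^{-A}$ for every fixed $A$; this holds because the parts $n\log\phi$ cannot all lie near a common multiple of $2\pi/t$, so $\sum_{n\le1/s_0}\bigl(1-\cos(nt\log\phi)\bigr)\gg1/s_0$. Near a resonance the partition part of $F$ is essentially restored, but the factors attached to $\calF_0$ and to the discrepancy $\log v_n-(n\log\phi-\log(\phi-\phibar))$ still force a strong saving — of the shape $\abs{F(s_0+it)}\ll F(s_0)\exp(-c(\log\log x)^2)$ once $t\log(\phi-\phibar)\not\equiv0\pmod{2\pi}$ — and a separate argument, using that some element of $\calF$ has logarithm incommensurable with $\log\phi$ so that $F$ is genuinely aperiodic, disposes of the remaining values of $j$. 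Making all of this uniform over $\abs t\le T$, fixing $T$ together with the level of Perron truncation, and tracking every error down to the claimed precision is the technical core; the exponent $1/10$ is a safe value, comfortably inside what the method delivers.
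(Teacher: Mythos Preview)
Your overall strategy --- Dirichlet product, saddle point on a vertical line $\Re s = s_0 \asymp (\log x)^{-1/2}$, Mellin-type expansion of $\log F(s)$ --- is the same as the paper's, and your expansion
\[
  \log F(s)=\frac{\pi^2}{6\log\phi}\cdot\frac1s - \beta\log s + C + O(s)
\]
with $\beta=\abs{\calF_0}-\tfrac{25}{2}+\log(\phi-\phibar)/\log\phi$ agrees with the paper's Lemma~2 (at $u=1$), and the computation of $k_1=\beta/2-1/4$ is correct.

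There is, however, a genuine gap in your minor-arc argument. Near the resonances $t\approx 2\pi k/\log\phi$ with $k\neq 0$ you claim a saving of size $\exp(-c(\log\log x)^2)$ coming from $\calF_0$ and from the discrepancies $\log v_n-(n\log\phi-\log(\phi-\phibar))$. This is not so: for large $n$ all the phases $v_n^{-it}$ collapse to (approximately) a common value $e^{it\log(\phi-\phibar)}$, and the corrections you point to contribute only $O(1)$ to $\log\abs{F(s_0+it)}$ (the discrepancies are $O((\abs\phibar/\phi)^n)$ and the set $\calF_0$ is finite). So $\abs{F(s_0+it)}$ really can be as large as a fixed positive constant times $F(s_0)$ at these points, and no incommensurability argument will save you.

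The paper sidesteps this entirely, and this is the one substantive difference from your outline: it uses the \emph{smoothed} Mellin--Perron kernel $x^z/(z(z+1))$ in place of $x^s/s$. With the extra decay, the trivial bound $\abs{F(r+it)}\le F(r)$ on the resonance intervals $\abs{t-2\pi k/\log\phi}<r^{3/4}$ already gives a contribution $\ll F(r)x^r\sum_{k\neq 0} r^{3/4}/k^2 \ll F(r)x^r r^{3/4}$, which is absorbed by the main term of order $F(r)x^r r^{1/2}$. The smoothing introduces a weight $(1-n/x)$, which is then removed by a sandwich $I_{\omega_\calF}(x,1)\le \abs{\calM(\calF)\cap[1,x]}\le I_{\omega_\calF}(x\log x,1)/(1-1/\log x)$; the two bounds have the same asymptotics up to the stated error. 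If you insist on the unsmoothed Perron formula, you need to truncate at some $T$, control the truncation error, and observe that the trivial bound on the resonances now gives $\ll F(r)x^r r^{3/4}\log T$, which is still acceptable for $T$ a power of $\log x$; but this is extra work, and your proposed route through aperiodicity is both unnecessary and unlikely to produce the savings you describe.
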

\begin{remark}
An explicit expression for $k_0$ can be given as well, but it is rather unwieldy.
\end{remark}

Since all elements of $\calM(\calF)$ have a unique factorisation into elements of $\calF$, it makes sense to consider the number of factors in this factorisation and to study its distribution. The celebrated Erd\H{o}s-Kac theorem \cite{Erdoes-Kac:1940:gauss} states that the number of distinct prime factors of a randomly chosen integer in $[1,x]$ is asymptotically normally distributed. The same is true if primes are counted with multiplicity. We refer to Chapter 12 of \cite{Elliott:1980:probabilistic} for a detailed discussion of the Erd\H{o}s-Kac theorem and its generalisations.

Our aim is to prove similar statements for the monoid $\calM(\calF)$. Let $n$ be an element of this monoid. By $\omega_\calF(n)$ and $\Omega_\calF(n)$, we denote the number of factors in the
factorisation of $n$ into elements of $\calF$ without and with multiplicities, respectively.

We first prove asymptotic normality for $\omega_\calF$, in complete analogy with the Erd\H{o}s-Kac theorem:

\begin{theorem}\label{theorem:asymptotic-normality}
  Let $N$ be a uniformly random positive integer in $\calM(\calF) \cap [1, x]$
  and let
  \begin{equation*}
    a_1 = \frac1{\pi}\sqrt{\frac{6}{\log\phi}},\qquad
    a_2 = \frac{\pi^2-6}{2\pi^3}\sqrt{\frac{6}{\log\phi}}.
  \end{equation*}
  The random variable $\omega_\calF(N)$ is asymptotically normal: we have
  \begin{equation*}
    \lim_{x\to\infty}\P\Bigl(\frac{\omega_\calF(N)-a_1\log^{1/2} x}{\sqrt{a_2}\log^{1/4}x}\le z\Bigr)=\frac1{\sqrt{2\pi}}\int_{0}^z
    e^{-y^2/2}\, dy. 
  \end{equation*}
\end{theorem}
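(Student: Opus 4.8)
The plan is to encode the statistic $\omega_\calF$ in a bivariate Dirichlet series, describe its singularity at $s=0$ exactly as in the proof of Theorem~\ref{theorem:count}, and then conclude by a saddle-point analysis followed by a quasi-power theorem. Since $\calM(\calF)$ is \emph{freely} generated by $\calF$, for $\Re s>0$ and $u$ in a bounded region we have the Euler product
\[
  F(s,u):=\sum_{n\in\calM(\calF)}u^{\omega_\calF(n)}n^{-s}
  =\prod_{f\in\calF}\Bigl(1+\frac{uf^{-s}}{1-f^{-s}}\Bigr);
\]
the series converges for all $\Re s>0$ because, by Theorem~\ref{theorem:count}, $\abs{\calM(\calF)\cap[1,x]}$ grows subpolynomially, so the relevant singularity sits at $s=0$. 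Writing $\Phi_x(u):=\sum_{n\in\calM(\calF)\cap[1,x]}u^{\omega_\calF(n)}$, the random variable of interest satisfies $\E\bigl[u^{\omega_\calF(N)}\bigr]=\Phi_x(u)/\Phi_x(1)$.

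The core step is the behaviour of $F(s,u)$ as $s\to0^+$, uniformly for $u$ in a complex neighbourhood of $1$. Using $\log\bigl(1+uf^{-s}/(1-f^{-s})\bigr)=\sum_{k\ge1}k^{-1}\bigl(1-(1-u)^k\bigr)f^{-ks}$ and a Mellin transform in $s$ — the pertinent transform being $\Gamma(p)\,\zeta_v(p)\,\bigl(\zeta(p+1)-\Li_{p+1}(1-u)\bigr)$, where $\zeta_v(p)=\sum_{n\ge1}(\log v_n)^{-p}$ has a simple pole at $p=1$ with residue $1/\log\phi$ and value $\log(\phi-\phibar)/\log\phi-1/2$ at $p=0$ — and then correcting for the finite set $\calF_0$ and for the omitted indices $n\le12$, one obtains
\[
  F(s,u)=C(u)\,s^{-\gamma}\exp\Bigl(\frac{g(u)}{s\log\phi}\Bigr)\bigl(1+o(1)\bigr),
  \qquad g(u)=\zeta(2)-\Li_2(1-u),
\]
with $C$ analytic near $u=1$ and, crucially, $\gamma=\abs{\calF_0}+\log(\phi-\phibar)/\log\phi-25/2$ \emph{independent} of $u$. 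For $u=1$ this is exactly the input behind Theorem~\ref{theorem:count}, since $g(1)=\zeta(2)=\pi^2/6$; the essential point is that the marker $u$ perturbs the leading ``energy'' $g(u)$ and a lower-order analytic constant, but not the power of $s$.

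Perron's formula combined with the saddle-point method at $s_0(u)=\sqrt{g(u)/(\log\phi\,\log x)}$ then gives, uniformly for $u$ near $1$,
\[
  \Phi_x(u)=\widetilde C(u)\,(\log x)^{k_1}\exp\Bigl(2\sqrt{\tfrac{g(u)\log x}{\log\phi}}\Bigr)\bigl(1+O((\log x)^{-1/10})\bigr),
\]
with $\widetilde C$ analytic near $u=1$ and $k_1$ the same constant as in Theorem~\ref{theorem:count} (note that the exponent of $\log x$ does not depend on $u$ precisely because $\gamma$ does not). Dividing, $\E\bigl[e^{r\omega_\calF(N)}\bigr]=\bigl(\widetilde C(e^r)/\widetilde C(1)\bigr)\exp\bigl(U(r)\sqrt{\log x}\bigr)\bigl(1+O((\log x)^{-1/10})\bigr)$ uniformly for small $\abs{r}$, where $U(r)=\frac{2}{\sqrt{\log\phi}}\bigl(\sqrt{g(e^r)}-\sqrt{g(1)}\bigr)$. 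This is a quasi-power with scale $\sqrt{\log x}\to\infty$, so Hwang's quasi-power theorem (or a direct characteristic-function computation) shows $\omega_\calF(N)$ is asymptotically normal with mean $\sim U'(0)\sqrt{\log x}$ and variance $\sim U''(0)\sqrt{\log x}$. Finally, from $g(u)=\zeta(2)-\Li_2(1-u)$ and $g'(u)=-\log u/(1-u)$ one reads off $g(1)=\pi^2/6$, $g'(1)=1$, $g''(1)=-1/2$, whence $U'(0)=g'(1)/\sqrt{g(1)\log\phi}=a_1$ and, after expanding the square root to second order, $U''(0)=a_2$; this is the claimed limit law.

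I expect the main difficulty to lie in the second step together with the uniformity required in the third: establishing the singular expansion of $F(s,u)$ at $s=0$ with error terms uniform in $u$, and in particular correctly separating the contributions of $\calF_0$, of the excluded indices $n\le12$, and of the discrepancy between $v_n$ and $\phi^n/(\phi-\phibar)$ — the last being responsible, through the value $\zeta_v(0)$, for the term $\log(\phi-\phibar)/(2\log\phi)$ in $k_1$. All of this is essentially the analysis used to prove Theorem~\ref{theorem:count}, now carried through with the extra variable $u$ tracked along.
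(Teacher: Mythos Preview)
Your proposal is correct and follows essentially the same route as the paper: the bivariate Dirichlet series $F(s,u)=d(z,u)$, its singular expansion at $s=0$ via a Mellin transform of the harmonic sum $\log F$ (the paper's Lemma~\ref{lemma:central-approximation}, with the same transform $\Gamma(s)(\zeta(s+1)-\Li_{s+1}(1-u))\Lambda(s,0)$ and the same observation that the $s^{-2}$ coefficient, hence the power of $s$, is independent of $u$), a saddle-point evaluation at $s_0=\sqrt{a(u)/\log x}$ (Lemmata~\ref{lemma:medium-range}--\ref{lemma:gf-lower}), and then the Taylor expansion of $2\sqrt{a(e^t)}-2\sqrt{a(1)}$ to read off $a_1,a_2$. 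The only cosmetic differences are that the paper uses the second-order Mellin--Perron kernel $1/(z(z+1))$ together with a sandwich argument to remove the smoothing factor $1-n/x$, and closes with Curtiss' theorem on the moment generating function rather than Hwang's quasi-power theorem.
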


However, the situation changes when multiplicities are taken into account: unlike the arithmetic function $\Omega$, which counts all prime factors with multiplicity, $\Omega_\calF$ is not normally distributed. Its limiting distribution can rather be described as a sum of shifted exponential random variables, as the following theorem shows:

\begin{theorem}\label{theorem:distribution-Omega}
  Let $N$ be a uniformly random positive integer in $\calM(\calF) \cap [1, x]$,
  let $a_1$ be the same constant as in the previous theorem and, with $\gamma$ denoting the Euler-Mascheroni constant,
  \begin{align*}
    b_1 &= \frac{\sqrt{6 \log \phi}}{\pi} \bigg( \frac{2\gamma - \log(\pi^2\log\phi/6)}{2\log \phi} + \sum_{m \in \calF_0} \frac{1}{\log m} + \frac{1}{\log v_{13}(\phi,\phibar)} \\
&\qquad+ \sum_{k \geq 1} \Big( \frac{1}{\log v_{k+13}(\phi,\phibar)} - \frac{1}{k \log \phi} \Big) \bigg), \\
    b_2 &= \frac{\sqrt{6 \log \phi}}{\pi}.
  \end{align*}

  The random variable $\Omega_\calF(N)$, suitably normalised, converges weakly to a sum of shifted exponentially distributed random variables:
  \begin{equation*}
\frac{\Omega_\calF(N)-\frac{a_1}{2} \log^{1/2} x \log \log x - b_1 \log^{1/2} x}{b_2 \log^{1/2} x} \overset{(d)}{\to} \sum_{m \in \calF} \Big( X_m - \frac{1}{\log m} \Big), 
  \end{equation*}
where $X_m \sim \operatorname{Exp}(\log m)$.
\end{theorem}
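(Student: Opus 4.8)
The plan is to work with the Dirichlet-series-like generating function that encodes the factorisation structure. Since every element of $\calM(\calF)$ factors uniquely as a multiset of elements of $\calF$, the bivariate generating function tracking size (via $n^{-s}$, or more naturally $\exp(-s\log n)$) and the statistic $\Omega_\calF$ is the Euler-type product
\[
  F(s,u) = \prod_{m\in\calF} \frac{1}{1 - u\, m^{-s}},
\]
and $\abs{\calM(\calF)\cap[1,x]}$ together with the distribution of $\Omega_\calF$ is recovered by a Mellin--Perron / saddle-point analysis, exactly as in the proof of Theorem~\ref{theorem:count}. The exponential weights $X_m\sim\operatorname{Exp}(\log m)$ in the limit law are the tell-tale sign: the factor $\frac{1}{1-u m^{-s}} = \sum_{j\ge0} u^j m^{-js}$ contributes, near the saddle point $s=s_0(x)\to 0^+$, a geometric random variable for the multiplicity of $m$ whose mean is $\asymp 1/(s_0\log m)$; after the global normalisation by $b_2\log^{1/2}x \asymp 1/s_0$, a geometric of mean $c/s_0$ rescales to an $\operatorname{Exp}(\log m)$ variable, and the independence across $m\in\calF$ survives because the Euler product factorises. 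So morally the whole theorem is a "local limit at the saddle point" statement, and the shift $-1/\log m$ just re-centres each geometric to have the right first moment matched against the deterministic part.

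**Strategy via the Laplace/Fourier transform.** Concretely I would compute, for fixed real $t$, the moment generating function (or characteristic function)
\[
  \E\Bigl[\exp\bigl(t\,\Omega_\calF(N)\bigr)\Bigr]
  = \frac{1}{\abs{\calM(\calF)\cap[1,x]}} \sum_{\substack{n\in\calM(\calF)\\ n\le x}} e^{t\Omega_\calF(n)},
\]
whose numerator is handled by Mellin inversion against $F(s,e^{t})$. The dominant singularity analysis of $\log F(s,u)$ as $s\to0$ is driven by $\sum_{m\in\calF} \bigl(-\log(1-u m^{-s})\bigr)$; splitting $\calF=\calF_0\cup\{v_n:n\ge13\}$ and using $\log v_n = n\log\phi - \log(\phi-\phibar) + O(\phi^{-2n})$ (from $v_n=(\phi^n-\phibar^n)/(\phi-\phibar)$ and $\abs{\phibar}<\phi$), the tail sum becomes essentially $\sum_{n\ge13} -\log\bigl(1 - u\,e^{-sn\log\phi}\bigr)$, i.e.\ a polylogarithm $\Li_2$-type quantity whose behaviour as $s\to0$ is classical: $\sum_{n\ge1} -\log(1-ue^{-sn\alpha}) \sim \frac{\Li_2(u)}{s\alpha} + \bigl(\tfrac12 - \tfrac{\log(\ldots)}{\ldots}\bigr)\log\tfrac1s + \cdots$ uniformly for $u$ in a neighbourhood of $1$ (this is where the DLMF expansions quoted in the preamble get used). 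Then the saddle point equation, the second-derivative estimate, and the resulting Gaussian-type local expansion all go through as in Theorem~\ref{theorem:count}, but now with the extra parameter $u=e^{t/(b_2\log^{1/2}x)}$ tending to $1$ at the right rate. Tracking the $u$-dependence of the saddle point $s_0(x,u)$ and of the leading asymptotics of $\log F$ gives, after the normalisation prescribed in the statement, a limit of the form $\exp\bigl(\sum_{m\in\calF}(\text{something in }t)\bigr)$; matching this against $\prod_{m}\E[\exp(t(X_m - 1/\log m))]$, where $\E[e^{tX_m}] = \frac{\log m}{\log m - t}$ for $t<\log m$, is the bookkeeping that produces the constants $a_1$, $b_1$, $b_2$ exactly as stated — in particular the $\gamma$, the $\log(\pi^2\log\phi/6)$, and the conditionally convergent correction series $\sum_{k\ge1}\bigl(\tfrac1{\log v_{k+13}} - \tfrac1{k\log\phi}\bigr)$ fall out of the constant term in the $s\to0$ expansion of the $\Li_2$-sum.

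**Why the limit is a shifted-exponential sum, not Gaussian.** The contrast with Theorem~\ref{theorem:asymptotic-normality} deserves a sentence of explanation in the proof: for $\omega_\calF$ each factor contributes a \emph{Bernoulli} (present/absent) indicator, and the small-ones (near the saddle, with $m^{-s_0}$ close to $1$) are genuinely many and roughly independent, so a Lindeberg-type argument yields normality; but for $\Omega_\calF$ the multiplicity of each small $m$ is a \emph{geometric} variable with mean of order $\log^{1/2}x$ — of the same order as the fluctuations themselves — so the contributions do not aggregate into a CLT regime; instead the finitely-supported leading contributions from the smallest elements of $\calF$ dominate and survive individually in the limit, each as a rescaled geometric, i.e.\ an exponential.

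**Main obstacle.** The technical heart — and what I expect to be the hardest part — is making the saddle-point analysis \emph{uniform} in the auxiliary parameter: one needs the asymptotic expansion of $\log F(s,u)$, the location of the saddle $s_0(x,u)$, and the Gaussian remainder bound all to hold uniformly for $u$ in a shrinking complex neighbourhood of $1$ (of radius $\asymp \log^{-1/2}x$), with error terms good enough to beat the $\log^{-1/10}x$ threshold already appearing in Theorem~\ref{theorem:count}. This requires a careful uniform treatment of the $\Li_2$-type sum $\sum_{n}-\log(1-ue^{-sn\log\phi})$ as both $s\to0$ and $u\to1$ — in particular controlling it near $u=1$ where $\Li_2$ is still continuous but $\Li_1$ blows up, so the subleading term is delicate — together with a justification that the contributions of the finitely many exceptional elements $\calF_0$ and of the $O(\phi^{-2n})$ corrections to $\log v_n$ only affect lower-order constants. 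Once that uniformity is in hand, the convergence of characteristic functions on a fixed real interval (it suffices to take $t$ real and bounded, since the limit law's MGF is finite in a neighbourhood of $0$ — note $X_m - 1/\log m$ has MGF $\frac{\log m}{\log m - t}e^{-t/\log m}$, and the infinite product converges for $\abs t < \log 2$) upgrades to weak convergence by Lévy's continuity theorem, and the proof concludes.
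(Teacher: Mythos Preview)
Your high-level strategy matches the paper's: Euler product $\prod_{m\in\calF}(1-um^{-z})^{-1}$, Mellin--Perron with a saddle-point estimate, then convergence of moment generating functions (the paper cites Curtiss's theorem rather than L\'evy's, but this is cosmetic). Your heuristic for why geometrics rescale to exponentials is a nice addition not present in the paper.

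However, you have correctly identified the main obstacle---uniformity as $u\to 1$ and $s\to 0$ simultaneously, where $\Li_1$ blows up---but not the paper's device for dissolving it. The paper does \emph{not} expand $\log D(z,u)$ as a polylogarithm in $u$. Instead it substitutes $v=(\log u)/z$, so that $um^{-z}=e^{-z(\log m-v)}$ and
\[
  \log D(z,u)=-\sum_{m\in\calF}\log\bigl(1-e^{-z(\log m-v)}\bigr)
\]
is a harmonic sum in $z$ whose Mellin transform factors cleanly as $\Gamma(s)\zeta(1+s)\Lambda(s,v)$, with $\Lambda(s,v)=\sum_{m\in\calF}(\log m-v)^{-s}$ the shifted Dirichlet series already analysed in Lemma~\ref{lemma:Dirichlet-series} (this is precisely why that lemma carried the extra parameter $v$ from the outset). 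On the relevant range $u\in(\exp(-v_0 r/2),\exp(v_0 r/2))$ the quotient $v=(\log u)/r$ stays bounded, so all singular expansions are automatically uniform: the residue $A=\pi^2/(6\log\phi)$ at $s=1$ is \emph{independent of $u$}, hence the saddle $r=\sqrt{A/\log x}$ does not move with $u$, and the entire $u$-dependence lands in the analytic functions $B(v)=-\Lambda(0,v)$ and $C(v)=\partial_s\Lambda(s,v)\rvert_{s=0}$. The $\log\log x$ in the centring then arises not from any $\Li_1$ blow-up but simply from $B(v)\log r$ with $B(0)-B(v)=v/\log\phi$ linear in $v$; the infinite product of exponential MGFs comes directly from the formula for $C(v)-C(0)$ in Lemma~\ref{lemma:Dirichlet-series}. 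Your proposed route---a direct two-parameter uniform expansion of the polylogarithm sum with a $u$-dependent saddle---might be forced through, but it is genuinely harder, and the proposal offers no concrete mechanism for controlling the $\Li_1$ term; the $v$-substitution is the missing idea.
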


This is somewhat reminiscent of the situation for the number of parts in a \emph{partition}: Goh and Schmutz \cite{Goh-Schmutz:1995:number} proved that the number of distinct parts in a random partition of a large integer $n$ is asymptotically normally distributed, while the total number of parts with multiplicity was shown by Erd\H{o}s and Lehner \cite{Erdos-Lehner:1941:distribution} to follow a Gumbel distribution (which can also be represented as a sum of shifted exponential random variables). Indeed, our results are multiplicative analogues in a certain sense, since products turn into sums upon applying the logarithm, and $\log v_n(\phi,\phibar) \sim (\log \phi) n$.

We remark that the monoid $\calM(\calF)$ fits the definition of an \emph{arithmetical semigroup} as studied in abstract analytic number theory (see \cite{Knopfmacher:1975} for a general reference on the subject). However, as Theorem~\ref{theorem:count} shows, it is very sparse, so it does not satisfy the growth conditions that are typically imposed in this context. For arithmetical semigroups that satisfy such growth conditions, Erd\H{o}s-Kac type theorems are known as well, see \cite[Theorem 7.6.5]{Knopfmacher-Zhang:2001} and \cite[Theorem 3.1]{Wehmeier:2007}.

\section{Proof of Theorems~\ref{theorem:count} and \ref{theorem:asymptotic-normality}}

For real $u$ in a neighbourhood of $1$, we consider the Dirichlet generating function $d(z, u)$ that is defined as follows:
\begin{equation*}
  d(z, u):=\sum_{n\in\calM(\calF)}\frac{u^{\omega_\calF(n)}}{n^z},
\end{equation*}
for all complex $z$  for which the series converges (Lemma~\ref{lemma:central-approximation} will provide detailed information on convergence). Within the region of convergence, we have the product representation
\begin{equation}
  d(z, u)=\prod_{m\in \calF}(1+um^{-z}+um^{-2z}+\cdots) = \prod_{m\in\calF}\Bigl(1+\frac{um^{-z}}{1-m^{-z}}\Bigr).\label{eq:d-factorisation} 
\end{equation}

Set $h(n)=u^{\omega_\calF(n)}$ if $n\in\calM(\calF)$ and $h(n) = 0$ otherwise. We use the Mellin--Perron summation formula in the version
\begin{equation}\label{eq:mellin-perron}
  \sum_{1\le n\le x}h(n)\Bigl(1-\frac{n}{x}\Bigr)=\frac{1}{2\pi i}\int_{r-i\infty}^{r+i\infty} \biggl(\sum_{n=1}^{\infty}\frac{h(n)}{n^z}\biggr)x^z\frac{dz}{z(z+1)},
\end{equation}
for $x>0$ where $r$ is in the half-plane of absolute convergence of the
Dirichlet series $\sum_{n=1}^\infty \frac{h(n)}{n^s}$ (see e.g. \cite[Chapter 13]{Apostol:1976:introduction} or \cite[Theorem 2.1]{Flajolet-Grabner-Kirschenhofer-Prodinger:1994:mellin}). Thus
\begin{equation}\label{eq:Mellin-Perron-1}
  I_{\omega_\calF}(x, u):= \sum_{\substack{n\in\calM(\calF)\\n\le x}} u^{\omega_\calF(n)}\Bigl(1-\frac{n}{x}\Bigr) =
  \frac1{2\pi i}\int_{r-i\infty}^{r+i\infty}\frac{d(z, u)}{z(z+1)}x^z\,dz.
\end{equation}

We will use a saddle-point approach to evaluate this integral. As a first step,
we establish an estimate for $d(z,u)$ for $z=r+it$, $r\to 0^+$ and small
$t$. We start with an auxiliary lemma for another Dirichlet generating series.

\begin{lemma}\label{lemma:Dirichlet-series}
  Let $v$ be a complex number such that $\abs{v}<v_0$, where
  \begin{equation}\label{eq:definition-v_0}
    v_0 := \min \Bigl\{\log m \Bigm| m\in\calF_0 \text{ or } m=\frac{\phi^{13}}{\phi-\phibar}\Bigr\}
  \end{equation}
  and let $\Lambda(s, v)$ be given by the Dirichlet series
  \begin{equation*}
    \Lambda(s, v):=\sum_{m\in\calF}\frac1{(\log m - v)^s},
  \end{equation*}
which converges for $\Re s > 1$. The function  $\Lambda(s, v)$ can be analytically continued to a meromorphic function in $s$ with a single simple
  pole at $s=1$ with residue $1/\log\phi$.

  We have
  \begin{align}
    \Lambda(0, v)&=\abs{\calF_0}-\frac{25}{2} + \frac{\log(\phi-\phibar)+v}{\log\phi},\label{eq:Lambda-0-formula}\\
    \frac{\partial\Lambda(s, v)}{\partial s}\Bigr\rvert_{s=0}&= - \sum_{m\in \calF} \Big( \log \Big( 1 - \frac{v}{\log m} \Big) + \frac{v}{\log m} \Big) + \kappa_1v + \kappa_2,\notag
  \end{align}
where $\kappa_1$ and $\kappa_2$ are constants, and $\kappa_1$ is given by
\begin{equation}
\kappa_1 := \frac{\gamma - \log \log \phi}{\log \phi} + \sum_{m \in \calF_0} \frac{1}{\log m} + \frac{1}{\log v_{13}(\phi,\phibar)} + \sum_{k \geq 1} \Big( \frac{1}{\log v_{k+13}(\phi,\phibar)} - \frac{1}{k \log \phi} \Big).
\end{equation}
  Finally,
  \begin{equation*}
    \Lambda(s, v)=O(s^2)
  \end{equation*}
  for $-3/2\le\Re s\le 2$ with $\abs{s-1}\ge 1$ and $\abs{s}\ge 1$.
\end{lemma}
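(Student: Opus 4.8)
The plan is to obtain the meromorphic continuation of $\Lambda(s,v)$ by isolating a single Hurwitz zeta function. Write $\calF=\calF_0\cup\{v_n(\phi,\phibar)\mid n\ge 13\}$ and, for $n\ge 13$,
\[
\log v_n(\phi,\phibar)=n\log\phi-\log(\phi-\phibar)+\log\bigl(1-(\phibar/\phi)^n\bigr),
\]
where the last summand is $O(\rho^n)$ with $\rho=\abs{\phibar/\phi}<1$. Since $\sum_{n\ge 13}(n\log\phi-\log(\phi-\phibar)-v)^{-s}=(\log\phi)^{-s}\zeta\bigl(s,13-\tfrac{\log(\phi-\phibar)+v}{\log\phi}\bigr)$, I would split (first for $\Re s>1$, then by continuation)
\[
\Lambda(s,v)=\sum_{m\in\calF_0}\frac1{(\log m-v)^s}+(\log\phi)^{-s}\,\zeta(s,a)+R(s,v),
\]
with $a=13-\tfrac{\log(\phi-\phibar)+v}{\log\phi}$ (positive since $\abs v<v_0$), $\zeta(s,a)$ the Hurwitz zeta function, and $R(s,v)=\sum_{n\ge 13}\bigl((\log v_n-v)^{-s}-(n\log\phi-\log(\phi-\phibar)-v)^{-s}\bigr)$. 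The finite sum is entire; the mean value theorem bounds the summand of $R$ by $O(\abs s\,\rho^n n^{-\Re s-1})$, so $R$ is entire in $s$; and $(\log\phi)^{-s}\zeta(s,a)$ is meromorphic with a single simple pole at $s=1$ of residue $(\log\phi)^{-1}$, because $\Res_{s=1}\zeta(s,a)=1$. This yields the stated continuation, pole and residue.

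Evaluating at $s=0$ with $\zeta(0,a)=\tfrac12-a$ and $R(0,v)=0$ gives \eqref{eq:Lambda-0-formula}. For $\partial_s\Lambda(0,v)$ I would first show that
\[
\partial_s\Lambda(s,v)\big|_{s=0}+\sum_{m\in\calF}\Bigl(\log\bigl(1-\tfrac v{\log m}\bigr)+\tfrac v{\log m}\Bigr)
\]
is an affine function of $v$ (the series converges, each term being $O((\log m)^{-2})$). For this it suffices to check that both summands have the same second $v$-derivative, namely $\Lambda(2,v)=\sum_{m\in\calF}(\log m-v)^{-2}$: for the sum this is immediate, and for $\partial_s\Lambda(0,\cdot)$ it follows by differentiating the identity $\partial_v^2\Lambda(s,v)=s(s+1)\Lambda(s+2,v)$ (valid for $\Re s>1$, hence everywhere) in $s$ and setting $s=0$, which returns $\Lambda(2,v)$. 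The slope is then $\kappa_1=\tfrac{d}{dv}\partial_s\Lambda(0,v)\big|_{v=0}$, since the correction sum has vanishing $v$-derivative at $v=0$. Using $\partial_v\Lambda(s,v)=s\,\Lambda(s+1,v)$ and differentiating in $s$ at $0$ identifies $\tfrac{d}{dv}\partial_s\Lambda(0,v)$ with the constant term of the Laurent expansion of $\Lambda(\cdot,v)$ at $s=1$, hence at $v=0$ with that of $\sum_{m\in\calF}(\log m)^{-s}$ there. Reading this off from the decomposition, with $\zeta(s,a)=\tfrac1{s-1}-\psi(a)+O(s-1)$ and $a_0=13-\tfrac{\log(\phi-\phibar)}{\log\phi}$, gives
\[
\kappa_1=\sum_{m\in\calF_0}\frac1{\log m}-\frac{\log\log\phi+\psi(a_0)}{\log\phi}+\sum_{n\ge 13}\Bigl(\frac1{\log v_n}-\frac1{n\log\phi-\log(\phi-\phibar)}\Bigr),
\]
and it remains to match this with the asserted formula; this comes down to the digamma identity
\[
-\psi(a_0)=\gamma+\lim_{N\to\infty}\Bigl(\sum_{n=13}^{N}\frac1{n-\log(\phi-\phibar)/\log\phi}-\sum_{k=1}^{N-13}\frac1k\Bigr),
\]
itself a consequence of $\psi(x+m)-\psi(x)=\sum_{j=0}^{m-1}(x+j)^{-1}$, $\psi(1)=-\gamma$ and $\psi(x)=\log x+o(1)$.

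For the growth estimate on $-\tfrac32\le\Re s\le 2$ with $\abs{s-1}\ge1$ and $\abs s\ge1$: the finite sum over $\calF_0$ is $O(1)$; the summand bound above gives $R(s,v)=O(\abs s)$ after summing the convergent series (uniformly, as $-\Re s-1\le\tfrac12$); and $(\log\phi)^{-s}\zeta(s,a)=O(\abs s^2)$, since $(\log\phi)^{-s}=O(1)$ on the strip while $\zeta(s,a)$ is of polynomial growth off its pole, with $\zeta(s,a)=O(\abs{\Im s}^{1/2-\Re s})$ for $\Re s\le0$ (from the functional equation, so $O(\abs{\Im s}^2)$ on $\Re s=-\tfrac32$) and $\zeta(s,a)=O(1)$ for $\Re s$ near $2$; as the region stays away from both $s=1$ and $s=0$, Phragm\'en--Lindel\"of yields $\zeta(s,a)=O(\abs s^2)$ throughout. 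All implied constants may be taken uniform in $v$ on $\abs v\le v_1$ for any fixed $v_1<v_0$.

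I expect the main obstacle to be the last bookkeeping step in the evaluation of $\kappa_1$: one must keep careful track of which divergent tails cancel, so that the regularised sum $\sum_{k\ge1}\bigl(1/\log v_{k+13}-1/(k\log\phi)\bigr)$ emerges with exactly the right index shift, and one must check throughout that the differentiations in $s$ and in $v$ may be carried out termwise and that all estimates are uniform in $v$.
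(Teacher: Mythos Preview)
Your decomposition $\Lambda(s,v)=\sum_{m\in\calF_0}(\log m-v)^{-s}+(\log\phi)^{-s}\zeta(s,a)+R(s,v)$ with $a=13-(\log(\phi-\phibar)+v)/\log\phi$ is exactly the one the paper uses (there the pieces are called $\Lambda_0,\Lambda_1$ and $\alpha(v)$), and your treatment of the continuation, the residue, the value at $s=0$, and the growth bound all follow the paper's line; for the last point the paper simply cites Whittaker--Watson for $\zeta(s,a)=O(s^2)$, whereas you reconstruct it via the functional-equation bound on $\Re s=-3/2$ plus Phragm\'en--Lindel\"of, which is fine once you multiply by $(s-1)$ to remove the pole before interpolating.

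The genuine difference is in how you obtain $\partial_s\Lambda(0,v)$ and extract $\kappa_1$. The paper proceeds by brute force: it differentiates each of the three pieces termwise at $s=0$, invokes $\partial_s\zeta(s,\alpha)\big|_{s=0}=\log\Gamma(\alpha)-\tfrac12\log(2\pi)$, expands $\log\Gamma$ via the Weierstrass product, and then patiently collects the resulting logarithms and harmonic-type sums into the claimed shape. You instead argue structurally: from $\partial_v^2\Lambda(s,v)=s(s+1)\Lambda(s+2,v)$ you deduce that $\partial_s\Lambda(0,v)+\sum_m\bigl(\log(1-v/\log m)+v/\log m\bigr)$ has vanishing second $v$-derivative, hence is affine; and from $\partial_v\Lambda(s,v)=s\,\Lambda(s+1,v)$ you identify the slope $\kappa_1$ with the constant term of the Laurent expansion of $\Lambda(\cdot,0)$ at $s=1$, which you then read off from the decomposition using $\zeta(s,a)=\tfrac{1}{s-1}-\psi(a)+O(s-1)$. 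This is a cleaner and more conceptual route: it explains \emph{why} the derivative has the form ``explicit sum plus affine'' and reduces the determination of $\kappa_1$ to a single digamma identity, whereas the paper's computation, while elementary, is longer and requires tracking several infinite sums simultaneously. The trade-off is that your argument leans on interchanging $\partial_s$ and $\partial_v$ and on analytic continuation of identities such as $\partial_v\Lambda(s,v)=s\Lambda(s+1,v)$ across the pole at $s=1$; these are routine (both sides are holomorphic at $s=0$, the residue at $s=1$ being $v$-independent), but should be stated. Your closing caveat about the index shift is apt: in the final digamma step the natural pairing gives $\sum_{k=1}^{N-12}$ rather than $\sum_{k=1}^{N-13}$, though the discrepancy is $1/(N-12)\to 0$ and does not affect the limit.
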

\begin{proof}
  Let
  \begin{align*}
    \Lambda_0(s, v)&:=\sum_{m\in\calF_0} \frac{1}{(\log m - v)^s},\\
    \Lambda_1(s, v)&:=\sum_{j\ge
      13}\Bigl(\Bigl(\log\frac{\phi^j-\phibar^{j}}{\phi-\phibar} -
      v\Bigr)^{-s} - \Bigl(\log\frac{\phi^j}{\phi-\phibar} -
      v\Bigr)^{-s}\Bigr),\\
    \alpha(v)&:= 13 -\frac{\log(\phi-\phibar)+v}{\log\phi}.
  \end{align*}
  Note that our choice of $v_0$ guarantees that each summand of $\Lambda$,
  $\Lambda_0$ and $\Lambda_1$ is well-defined and that $\Re \alpha(v)>0$.

  Thus
\begin{align*}
  \Lambda(s, v)&= \Lambda_0(s, v)+\sum_{j\ge
    13}\frac{1}{(j\log\phi-\log(\phi-\phibar)-v)^s}+\Lambda_1(s, v)\\
  &= \frac{1}{\log^s\phi}\zeta(s, \alpha(v)) + \Lambda_0(s, v) + \Lambda_1(s, v),
\end{align*}
  where $\zeta(s, \beta)=\sum_{k\ge 0}(k+\beta)^s$ denotes the Hurwitz zeta
  function (the series converges for $\Re s  > 1$ if $\beta \not\in \{0,-1,-2,\ldots\}$, and it can be analytically continued),  cf.~\DLMF{25.11}{1}.

  The function $\Lambda_0(s, v)$ is obviously an entire function, and it is bounded
  for $\Re s\ge -3/2$. 

  Estimating the difference occurring in $\Lambda_1(s, v)$, we see that
  \begin{equation*}
    \Lambda_1(s, v)=O\biggl(\sum_{j\ge 13}\frac{s(\abs\phibar/\phi)^{j}}{(j\log\phi-\log(\phi-\phibar)-v)^{\Re s+1}}\biggr).
  \end{equation*}
  Thus $\Lambda_1(s,v)$ is an entire function, and we have the estimate
  $\Lambda_1(s, v)=O(s)$ for $\Re s\ge -3/2$.

Moreover, $\zeta(s, \alpha)$ is a meromorphic function  with a single simple
  pole at $s=1$ with residue $1$, and by~\cite[\S 13.51]{Whittaker-Watson:1996}, we have
  \begin{equation*}
    \zeta(s, \alpha(v))=O(s^2)
  \end{equation*}
  in the given area, which proves the desired asymptotic estimate. It remains to determine the values of the function and its derivative at $s=0$.

Observe first that $\Lambda_0(0, v)=\abs{\calF_0}$ and $\Lambda_1(0, v)=0$.
Moreover, we have
  \begin{equation*}
    \zeta(0, \alpha(v))=\frac12-\alpha(v),
  \end{equation*}
 cf.~\DLMF{25.11}{13}. Now we consider the first derivative. $\Lambda_0$ and $\Lambda_1$ are simply differentiated term by term:
\begin{equation*}
 \frac{\partial\Lambda_0(s, v)}{\partial s}\Bigr\rvert_{s=0}  = - \sum_{m \in \calF_0} \log(\log m - v) = - \sum_{m \in \calF_0} \Big( \log \log m +  \log \Big( 1 - \frac{v}{\log m} \Big) \Big)
\end{equation*}
and
\begin{align*}
 \frac{\partial\Lambda_1(s, v)}{\partial s}\Bigr\rvert_{s=0}  &= - \sum_{j \geq 13} \Bigl(\log \Bigl(\log\frac{\phi^j-\phibar^{j}}{\phi-\phibar} -
      v\Bigr) - \log \Bigl(\log\frac{\phi^j}{\phi-\phibar}  - v\Bigr)\Bigr) \\
&= - \sum_{j \geq 13} \Bigl(\log \Bigl(\log v_j(\phi,\phibar) -
      v\Bigr) - \log \log \phi - \log(j-13 + \alpha(v))\Bigr).
\end{align*}
Finally, it is well known (see~\DLMF{25.11}{18}) that
\begin{equation*}
\frac{\partial \zeta(s,\alpha)}{\partial s}\Bigr\rvert_{s=0} = \log \Gamma(\alpha) - \frac12 \log (2\pi),
\end{equation*}
hence
\begin{equation*}
\frac{\partial}{\partial s} \frac{1}{\log^s\phi}\zeta(s, \alpha(v)) \Bigr\rvert_{s=0} = - \log \log \phi \Big( \frac12 - \alpha(v) \Big) + \log \Gamma(\alpha(v)) - \frac12 \log (2\pi).
\end{equation*}
Now we make use of the product representation of the Gamma function, which yields
\begin{align*}
\frac{\partial}{\partial s} \frac{1}{\log^s\phi}\zeta(s, \alpha(v)) \Bigr\rvert_{s=0} &= - \log \log \phi \Big( \frac12 - \alpha(v) \Big) - \frac12 \log (2\pi) - \gamma \alpha(v) - \log \alpha(v) \\
&\quad - \sum_{k \geq 1} \Big( \log(k + \alpha(v)) - \log k - \frac{\alpha(v)}{k} \Big).
\end{align*}
The infinite sum can be combined with the sum in the derivative of $\Lambda_1$ to give us
\begin{align*}
 \frac{\partial\Lambda(s, v)}{\partial s}\Bigr\rvert_{s=0}&= - \sum_{m \in \calF_0} \Big( \log \log m +  \log \Big( 1 - \frac{v}{\log m} \Big) \Big) - \log \log \phi \Big( \frac12 - \alpha(v) \Big) \\
&\quad- \frac12 \log(2\pi) - \gamma \alpha(v) - \log \Big( 1 - \frac{v}{\log  v_{13}(\phi,\phibar)} \Big)\\
&\quad - \log \log v_{13}(\phi,\phibar) + \log \log \phi \\
&\quad- \sum_{k \geq 1}  \Big( \log(\log v_{k+13}(\phi,\phibar) - v) - \log \log \phi - \log k - \frac{\alpha(v)}{k} \Big),
\end{align*}
which can finally be rewritten as
\begin{align*}
 \frac{\partial\Lambda(s, v)}{\partial s}\Bigr\rvert_{s=0}&= - \sum_{m\in \calF} \Big( \log \Big( 1 - \frac{v}{\log m} \Big) + \frac{v}{\log m} \Big) \\
&\quad - \sum_{m \in \calF_0} \log \log m + \sum_{m \in \calF_0} \frac{v}{\log m} + \log \log \phi \Big( \frac{27}{2} - \frac{\log(\phi - \phibar) + v}{\log \phi} \Big) \\
&\quad - \frac12 \log(2\pi) - \gamma \Big( 13 - \frac{\log(\phi - \phibar) + v}{\log \phi} \Big) - \log \log v_{13}(\phi,\phibar) \\
&\quad + \frac{v}{\log v_{13}(\phi,\phibar)} + v \sum_{k \geq 1} \Big( \frac{1}{\log v_{k+13}(\phi,\phibar)} - \frac{1}{k \log \phi} \Big) \\
&\quad - \sum_{k \geq 1} \Big( \log \log v_{k+13}(\phi,\phibar) - \log \log \phi - \log k - \frac{13}{k} + \frac{\log(\phi-\phibar)}{k \log \phi} \Big).
\end{align*}
Apart from the first sum, all terms are indeed either constant or linear in $v$. Collecting the linear terms gives the stated formula for $\kappa_1$, completing our proof.
\end{proof}

\begin{lemma}\label{lemma:central-approximation}
  Let $r>0$, $z=r+it$ with $\abs{t}\le r^{7/5}$, and $\abs{1-u}<1$. 

The Dirichlet series $d(z, u)$ converges absolutely for $\Re z>0$, and we have the asymptotic estimates
  \begin{align}
   d(z, u) &= d(r, u)\exp\Bigl( -\frac{ia(u)t}{r^2} - \frac{a(u)t^2}{r^3} +
   O(r^{1/5})\Bigr),\notag\\
   d(r, u) &= \exp\Bigl(\frac{a(u)}{r} + b\log r +c(u) +  O(r)\Bigr)\label{eq:d-r-u-estimate}
  \end{align}
  for $r\to 0^+$ and
  \begin{equation*}
    a(u)=\frac{\pi^2/6-\Li_{2}(1-u)}{\log \phi},
  \end{equation*}
  where $\Li$ denotes the polylogarithm, 
  \begin{equation*}
    b=-\abs{\calF_0}+\frac{25}{2} - \frac{\log(\phi-\phibar)}{\log\phi}
  \end{equation*}
  and  $c(u)$
  is a function which is analytic in a complex neighbourhood of $1$. The estimates are uniform in $u$ on compact sets.
\end{lemma}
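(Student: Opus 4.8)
The plan is to pass to logarithms and analyse $\log d(z,u)$ through a Mellin transform. Writing each factor of \eqref{eq:d-factorisation} as $1+\frac{um^{-z}}{1-m^{-z}}=\frac{1-(1-u)m^{-z}}{1-m^{-z}}$ and noting that $\Re(1-w\,m^{-z})\ge 1-\abs{w}\,m^{-\Re z}>0$ whenever $\abs{w}\le 1$ and $\Re z>0$, all quantities involved stay in the right half-plane; since $\log v_n(\phi,\phibar)\sim n\log\phi$ forces $v_n$ to grow geometrically, $\sum_{m\in\calF}m^{-\Re z}<\infty$ for $\Re z>0$, so the product converges absolutely there and, with principal branches,
\begin{equation*}
  \log d(z,u)=\sum_{m\in\calF}g(z\log m),\qquad g(x):=\log\bigl(1-(1-u)e^{-x}\bigr)-\log(1-e^{-x}).
\end{equation*}
Because $g(x)=\log u-\log x+O(x)$ as $x\to 0^+$ and $g(x)=O(e^{-x})$ as $x\to\infty$, the Mellin transform of $g$ has fundamental strip $\Re s>0$, and expanding both logarithms in powers of $e^{-x}$ and integrating termwise yields
\begin{equation*}
  \widetilde g(s)=\int_0^\infty g(x)\,x^{s-1}\,dx=\Gamma(s)\bigl(\zeta(s+1)-\Li_{s+1}(1-u)\bigr),
\end{equation*}
a meromorphic function with a \emph{double} pole at $s=0$ (both $\Gamma$ and $\zeta(s+1)$ contribute there) and simple poles at $s=-1,-2,\dots$.

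Next I would apply Mellin inversion together with Fubini's theorem — legitimate for $\sigma>1$ since $\sum_{m\in\calF}(\log m)^{-\sigma}<\infty$ and $\Gamma$ decays exponentially on vertical lines — to get, first for real $z>0$ and then, by analyticity of both sides, for all $\Re z>0$,
\begin{equation*}
  \log d(z,u)=\frac1{2\pi i}\int_{\sigma-i\infty}^{\sigma+i\infty}\widetilde g(s)\,\Lambda(s,0)\,z^{-s}\,ds\qquad(\sigma>1),
\end{equation*}
with $\Lambda(s,0)$ the $v=0$ instance of Lemma~\ref{lemma:Dirichlet-series}. Shifting the contour to $\Re s=-3/2$ is admissible: the horizontal segments vanish because of the exponential decay of $\Gamma$ against the at most polynomial growth of $\zeta$, $\Li_{s+1}$ and — crucially — the bound $\Lambda(s,0)=O(s^2)$ from Lemma~\ref{lemma:Dirichlet-series}, and the remaining integral is $O(\abs{z}^{3/2})$. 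The poles crossed contribute: at $s=1$ the residue $\widetilde g(1)\cdot\tfrac1{\log\phi}\cdot z^{-1}=\frac{a(u)}{z}$ (using $\widetilde g(1)=\zeta(2)-\Li_2(1-u)$); at the double pole $s=0$, after expanding $\widetilde g(s)=s^{-2}+(\log u)s^{-1}+O(1)$, $\Lambda(s,0)=\Lambda(0,0)+\kappa_2 s+O(s^2)$ and $z^{-s}=1-s\log z+O(s^2)$, the residue $\kappa_2+\Lambda(0,0)\log u-\Lambda(0,0)\log z$; and at $s=-1$ an $O(\abs{z})$ term. Since $\Lambda(0,0)=\abs{\calF_0}-\tfrac{25}{2}+\tfrac{\log(\phi-\phibar)}{\log\phi}=-b$ by \eqref{eq:Lambda-0-formula}, altogether
\begin{equation*}
  \log d(z,u)=\frac{a(u)}{z}+b\log z+c(u)+O(\abs{z}),\qquad c(u)=\kappa_2+\Lambda(0,0)\log u,
\end{equation*}
uniformly for $u$ in compact subsets of $\{\abs{1-u}<1\}$, with $c(u)$ analytic near $u=1$ because the only $u$-dependence enters through $\log u$ and the polylogarithmic terms.

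It remains to specialise. Putting $z=r>0$ and exponentiating gives \eqref{eq:d-r-u-estimate}. For $z=r+it$ with $\abs{t}\le r^{7/5}$ we have $\abs{t}/r\le r^{2/5}\to 0$; expanding $\frac1z=\frac1r\bigl(1-\tfrac{it}{r}-\tfrac{t^2}{r^2}+O(t^3/r^3)\bigr)$ gives $\frac{a(u)}{z}=\frac{a(u)}{r}-\frac{ia(u)t}{r^2}-\frac{a(u)t^2}{r^3}+O(r^{1/5})$ because $t^3/r^4\le r^{1/5}$, while $b\log z=b\log r+O(r^{2/5})$ and $O(\abs{z})=O(r)$; subtracting the $z=r$ estimate leaves $\log d(z,u)-\log d(r,u)=-\frac{ia(u)t}{r^2}-\frac{a(u)t^2}{r^3}+O(r^{1/5})$, which is the first claimed asymptotic after exponentiation. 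The main obstacle is the analysis of the double pole at $s=0$: extracting the coefficient of $\log z$ in its residue and verifying it is exactly $b$, isolating the genuinely analytic constant $c(u)$, and then tracking error terms so that they collapse to $O(r^{1/5})$ (respectively $O(r)$) under $\abs{t}\le r^{7/5}$ — which is precisely why the shift must be carried all the way to $\Re s=-3/2$, as permitted by $\Lambda(s,0)=O(s^2)$ in Lemma~\ref{lemma:Dirichlet-series}.
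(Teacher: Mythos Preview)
Your argument is correct and follows essentially the same route as the paper: form the harmonic sum $\log d(z,u)=\sum_{m\in\calF}f(z\log m)$, take its Mellin transform $\Gamma(s)\bigl(\zeta(s+1)-\Li_{s+1}(1-u)\bigr)\Lambda(s,0)$, and read off the asymptotic $\frac{a(u)}{z}+b\log z+c(u)+O(z)$ from the poles at $s=1$ and $s=0$, using the growth bound on $\Lambda(s,0)$ from Lemma~\ref{lemma:Dirichlet-series} to justify the contour shift.

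The only noteworthy difference is in the last step. The paper first derives estimates for the first three $z$-derivatives of $\log d(z,u)$ (by multiplying the Mellin transform by $-s$, $(-s)^2$, etc.) and then Taylor-expands around $z=r$; you instead substitute $z=r+it$ directly into the asymptotic expansion valid for complex $z$ in a sector and expand $1/z$ and $\log z$ in powers of $t/r$. Your shortcut works because the $O(\lvert z\rvert)$ remainder, evaluated at $z$ and at $r$, is $O(r)\subset O(r^{1/5})$ in the regime $\lvert t\rvert\le r^{7/5}$, so one never needs to control derivatives of the error term. This is a mild simplification; the paper's derivative estimates are not strictly necessary here. One small point to tighten: the extension ``by analyticity of both sides, for all $\Re z>0$'' should really be ``for $\lvert\arg z\rvert<\pi/2-\varepsilon$'', since the convergence of the shifted integral along $\Re s=-3/2$ relies on the $\Gamma$-decay dominating the factor $e^{\Im s\cdot\arg z}$ coming from $z^{-s}$; this is harmless because $\lvert\arg(r+it)\rvert\le r^{2/5}\to 0$.
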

\begin{proof}
Let
\begin{equation*}
  g(z, u):=\sum_{m\in\calF}\log\Bigl(1+\frac{um^{-z}}{1-m^{-z}}\Bigr),
\end{equation*}
which implies $d(z, u)=\exp(g(z,u))$ by \eqref{eq:d-factorisation}.
For $\Re z>0$ and fixed $u$, we have
\begin{equation*}
  \log\Bigl(1+\frac{um^{-z}}{1-m^{-z}}\Bigr) \sim um^{-z}
\end{equation*}
for $m\to\infty$. Since the elements of $\calF$ follow the asymptotic formula $v_\ell = v_\ell(\phi,\phibar) \sim \frac{\phi^\ell}{\phi-\phibar}$, the
series $g(z,u)$ converges absolutely.

We rewrite $g(z, u)$ as
\begin{align*}
  g(z, u)= \sum_{m\in\calF}\log\Bigl(1+\frac{ue^{-z\log m}}{1-e^{-z\log m}}\Bigr)=\sum_{m\in\calF}
f(z\log m, u)
\end{align*}
for $f(z, u)=\log\bigl(1+\frac{ue^{-z}}{1-e^{-z}}\bigr)$.

For $\Re s>1$, we consider the Mellin transform $g^\star(s, u)$ of the harmonic sum $g(z, u)$
and obtain
\begin{equation}\label{eq:g-star}
  g^\star(s, u) = f^\star(s, u) \sum_{m\in\calF}\frac1{(\log m)^s}=f^\star(s,
  u)\Lambda(s, 0)
\end{equation}
with $\Lambda$ as defined in Lemma~\ref{lemma:Dirichlet-series}.

We compute $f^\star(s, u)$. We have
\begin{align*}
  f(z, u)&
  =\log\Bigl(1+\frac{ue^{-z}}{1-e^{-z}}\Bigr)
  =\log(1-(1-u)e^{-z}) - \log(1-e^{-z})\\
  &=\sum_{j=1}^{\infty}\frac{1-(1-u)^j}{j}e^{-zj}.
\end{align*}
Thus the Mellin transform is
\begin{equation*}
  f^\star(s, u) = \biggl(\sum_{j\ge 1}\frac{1-(1-u)^j}{j^{1+s}}\biggr)\Gamma(s)
  =(\zeta(s+1)-\Li_{s+1}(1-u)) \Gamma(s).
\end{equation*}
Note that the polylogarithm $\Li_{s+1}(1-u)$ is an entire
function in $s$ for $\abs{1-u}<1$.

We conclude that
\begin{equation*}
  g^\star(s, u) = (\zeta(s+1)-\Li_{s+1}(1-u))\Gamma(s)\Lambda(s, 0).
\end{equation*}
This is a meromorphic function in $s$ with a simple pole at $s=1$, a double pole at
$s=0$ and at most simple poles when $s$ is a negative integer. When $s$ runs along vertical lines,
$g^\star(s, u)z^{-s}$ decreases exponentially for $\abs{\arg(z)}<\pi/4$ due to the factor $\Gamma(s)$.
We have
\begin{equation*}
  \Res_{s=1} g^\star(s, u)=\frac{\pi^2/6-\Li_{2}(1-u)}{\log \phi} =: a(u).
\end{equation*}
In addition, we have the 
singular expansion
\begin{equation*}
  g^\star(s, u) = -\frac{b}{s^2} + \frac{c(u)}{s} + O(1)
\end{equation*}
around $0$, where $b=-\Lambda(0, 0)$ and 
\begin{equation*}
c(u) = \Lambda(0,0) \log u + \frac{\partial \Lambda(s,0)}{\partial s} \Bigr\rvert_{s=0}
\end{equation*}
is an analytic function of $u$ for $\abs{1-u}<1$. Note that  $b$ is independent of $u$ because the only component
of $g^\star(s,u)$ depending on $u$ is $\Li_{s+1}(1-u)$, which does not
contribute to the term $1/s^2$.

By \cite[Theorem~4]{Flajolet-Gourdon-Dumas:1995:mellin} (which remains valid
for complex $z$ with $\abs{\arg(z)}<\pi/4$ because of the exponential decay
observed above; cf.~\cite{Flajolet-Prodinger:1986:regis}), we get
\begin{equation*}
  g(z, u) = \frac{a(u)}{z}+b\log z+c(u)+ O(z)
\end{equation*}
for $z\to 0$, $\abs{\arg(z)}<\pi/4$.

As the Mellin transform of $z \frac{\partial g(z, u)}{\partial z}$ is
$(-s)g^\star(s, u)$ by general properties of the Mellin transform, we
immediately deduce that
\begin{equation}\label{eq:first-derivative-g}
  z \frac{\partial g(z, u)}{\partial z}=-\frac{a(u)}{z}+O(1)
\end{equation}
for $z\to 0$, $\abs{\arg(z)}<\pi/4$, because the Mellin transform now has a simple pole at $s=0$. Repeating the argument, we get
\begin{align}
  z^2 \frac{\partial^2 g(z, u)}{\partial^2 z} + z \frac{\partial g(z,
    u)}{\partial z}&=\frac{a(u)}{z}+O(z),\label{eq:second-derivative-g}\\
  z^3\frac{\partial^3 g(z, u)}{\partial^3 z} + 3z^2 \frac{\partial^2 g(z, u)}{\partial^2 z} + z \frac{\partial g(z,
    u)}{\partial z}&=-\frac{a(u)}{z}+O(z)\label{eq:third-derivative-g}
\end{align}
for $z\to 0$, $\abs{\arg(z)}<\pi/4$.
Solving the linear system consisting of \eqref{eq:first-derivative-g},
\eqref{eq:second-derivative-g} and \eqref{eq:third-derivative-g} yields
\begin{align*}
  \frac{\partial g(z, u)}{\partial z}&=-\frac{a(u)}{z^2}+O\Bigl(\frac1z\Bigr),\\
  \frac{\partial^2 g(z, u)}{\partial^2 z}
  &=\frac{2a(u)}{z^3}+O\Bigl(\frac1{z^2}\Bigr),\\
  \frac{\partial^3 g(z, u)}{\partial^3 z}
  &=-\frac{6a(u)}{z^4}+O\Bigl(\frac1{z^3}\Bigr)
\end{align*}
for $z\to 0$, $\abs{\arg(z)}<\pi/4$.

Thus we can approximate $g(z, u)$ by Taylor expansion around $z=r$ as
\begin{equation*}
  g(z, u)=g(r, u) -\frac{ia(u)t}{r^2}+O\Bigl(\frac{t}{r}\Bigr) - \frac{a(u)t^2}{r^3} +
  O\Bigl(\frac{t^2}{r^2}\Bigr) + O\Bigl(\frac{t^3}{r^4}\Bigr).
\end{equation*}
With our choice of the upper bound for $t$, we get
\begin{equation*}
    g(z, u)=g(r, u) -\frac{ia(u)t}{r^2} - \frac{a(u)t^2}{r^3}
  + O(r^{1/5}).
\end{equation*}
This concludes the proof of the lemma.
\end{proof}

As a next step, we show that  $\abs{d(r+it,u)}$ is exponentially smaller
than $d(r,u)$ for all $t$ which are not too close to integer multiples of $2\pi/\log\phi$.

\begin{lemma}\label{lemma:medium-range}
  Let $r>0$ and $z=r+it$ with $\abs{t}\ge r^{7/5}$. Then
  \begin{equation*}
    \log d(r, u) - \Re \log d(z, u)\gg \frac1{r^{1/5}}
  \end{equation*}
  for $u\in (1/2, 3/2)$ and $r\to 0^+$ unless there is a non-zero integer $k$ such that
  $\abs{t-2k\pi/\log \phi}<r^{3/4}$.
\end{lemma}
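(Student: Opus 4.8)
The plan is to bound $\log d(r,u)-\Re\log d(z,u)$ from below by a nonnegative sum indexed by $\calF$ and then to exploit the quasi-geometric growth $v_\ell(\phi,\phibar)\sim\phi^\ell/(\phi-\phibar)$: the sum will turn out to be $\gg r^{-1/5}$ unless $t$ lies extremely close to one of the ``secondary saddles'' $2k\pi/\log\phi$.

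First I would reuse the expansion from the proof of Lemma~\ref{lemma:central-approximation}, $\log d(w,u)=\sum_{m\in\calF}\sum_{j\ge1}\frac{1-(1-u)^j}{j}m^{-jw}$. For $u\in(1/2,3/2)$ every coefficient is positive ($1-(1-u)^j\ge\tfrac12$) and $m^{-jr}(1-\cos(jt\log m))\ge0$, so discarding all terms with $j\ge2$ and all $m\in\calF_0$, and using $1-\cos y\ge\tfrac{2}{\pi^2}\lVert y\rVert^2$ with $\lVert\cdot\rVert$ denoting distance to $2\pi\Z$, gives
\begin{equation*}
  \log d(r,u)-\Re\log d(r+it,u)\ \ge\ \frac1{\pi^2}\sum_{\ell\ge13}v_\ell^{-r}\,\lVert t\log v_\ell\rVert^2 .
\end{equation*}
It remains to bound the right-hand side below by $\gg r^{-1/5}$. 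I would next record $\log v_\ell=\ell\log\phi-\log(\phi-\phibar)+\log\bigl(1-(\phibar/\phi)^\ell\bigr)$, that is, $t\log v_\ell\equiv\ell\vartheta+\varphi+\delta_\ell\pmod{2\pi}$ with $\vartheta=t\log\phi$, $\varphi=-t\log(\phi-\phibar)$ and $\abs{\delta_\ell}\ll\abs{t}(\abs{\phibar}/\phi)^\ell$; hence $\abs{\delta_\ell}<\tfrac1{100}$ for $\ell\ge\ell_1$ with $\ell_1\ll1+\log^+\abs{t}$, and then $\lVert t\log v_\ell\rVert\ge\lVert\ell\vartheta+\varphi\rVert-\tfrac1{100}$. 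I also note $v_\ell^{-r}\ge e^{-1}$ for $13\le\ell\le A/(r\log\phi)$ once $A\in(0,1)$ is fixed, and $\log v_\ell\gg\ell$ for all $\ell\ge13$.

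The argument then splits according to the size of $\lVert\vartheta\rVert$, for a sufficiently large constant $C$. \emph{If $\lVert\vartheta\rVert\ge Cr$:} I would run $\ell$ over $I=[\ell_1,A/(r\log\phi)]$, which contains $\gg1/r$ integers provided $\abs{t}\le e^{c_0/r}$ (so that $\ell_1\ll1/r$). On $I$ the points $\ell\vartheta+\varphi$ form an arithmetic progression of common difference $\vartheta$ with $\lVert\vartheta\rVert\in[Cr,\pi]$; a routine block-counting estimate (subdivide $I$ into blocks of $\lceil2\pi/\lVert\vartheta\rVert\rceil$ consecutive integers) shows that a positive proportion of these $\ell$ have $\lVert\ell\vartheta+\varphi\rVert\ge\tfrac1{10}$, hence $\lVert t\log v_\ell\rVert\ge\tfrac1{20}$, and since $v_\ell^{-r}\ge e^{-1}$ on $I$ we get $\gg1/r\gg r^{-1/5}$. \emph{If $\lVert\vartheta\rVert<Cr$:} the nearest multiple of $2\pi$ to $t\log\phi$ must be $0$, for if it were $2k\pi$ with $k\ne0$ then $\abs{t-2k\pi/\log\phi}=\lVert\vartheta\rVert/\log\phi<Cr/\log\phi<r^{3/4}$ for small $r$, contradicting the hypothesis; so $\abs{t}=\lVert\vartheta\rVert/\log\phi<Cr/\log\phi$, which with $\abs{t}\ge r^{7/5}$ gives $r^{7/5}\le\abs{t}\le Cr/\log\phi$. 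Choosing $N'=\lfloor A'/r\rfloor$ with $A'$ small enough (relative to $C$) that $\abs{t\log v_\ell}<\pi$ for all $\ell\le N'$, there is no wrap-around, so $\lVert t\log v_\ell\rVert=\abs{t}\log v_\ell\gg\abs{t}\,\ell$ while $v_\ell^{-r}\ge e^{-1}$, whence $\sum_{\ell\ge13}v_\ell^{-r}\lVert t\log v_\ell\rVert^2\gg t^2\sum_{13\le\ell\le N'}\ell^2\gg t^2/r^3\ge r^{14/5}/r^3=r^{-1/5}$. The exponent $\tfrac75$ is precisely what makes this last inequality work, and $\tfrac34$ is merely a convenient margin separating ``$t$ near $0$'' from ``$t$ near a nonzero peak''.

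The main obstacle I anticipate is choosing the constants $C,A,A',c_0$ consistently so that the two regimes cover every admissible $t$; the argument needs only $\abs{t}\le e^{c_0/r}$, which already exceeds the point at which the Mellin--Perron integral \eqref{eq:Mellin-Perron-1} is truncated (the tail being controlled by the trivial bound $\abs{d(r+it,u)}\le d(r,u)$, valid for $u\in(1/2,3/2)$), so this suffices for the applications. If one wants the stated uniformity in $\abs{t}$, the only extra work is to extend the equidistribution step to $\abs{t}>e^{c_0/r}$, where the increments $\log v_{\ell+1}-\log v_\ell$ can no longer be replaced by $\log\phi$; these increments still approach $\log\phi$ geometrically, the ratio being the algebraic number $\phibar/\phi$ of modulus $<1$, so only $O(\log\abs{t})$ of the $v_\ell$ can be in simultaneous near-resonance, which leaves enough room for the same counting.
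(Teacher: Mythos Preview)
Your argument is correct and follows the paper's overall strategy: expand $\log d$ as a double sum, keep only the $j=1$ terms, restrict to $v_\ell$ with $\ell\lesssim 1/r$ so that $v_\ell^{-r}\asymp 1$, and then carry out a case analysis on $t$. The execution differs in two respects. First, the paper splits into three ranges of $\abs{t}$ rather than your two cases on $\lVert t\log\phi\rVert$: for $\abs{t}\le r/\log\phi$ it applies $1-\cos\theta\gg\theta^2$ over all $\ell\le 1/r$ to obtain $\gg t^2/r^3$; for $r/\log\phi\le\abs{t}\le r^{1/5}$ it shortens the range further to $\ell\le 1/(\abs{t}\log\phi)$ (still avoiding wrap\-around) and gets $\gg 1/\abs{t}\ge r^{-1/5}$; your Case~B is essentially the first of these, while your Case~A absorbs both the second and the third. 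Second, for $\abs{t}>r^{1/5}$ the paper bounds $\sum_\ell\cos(t\log v_\ell)$ directly by the geometric-sum estimate $\bigl|\sum_\ell e^{i\ell\vartheta}\bigr|\le 2/\abs{e^{i\vartheta}-1}$, which under the hypothesis is $O(r^{-3/4})$, yielding $\sum_\ell(1-\cos(t\log v_\ell))\ge (1/r-O(1))-O(r^{-3/4})\gg 1/r$; this is a little more direct than your block-counting, though the two arguments are equivalent in spirit. Incidentally, the paper's replacement $\exp(it\log v_\ell)=\exp(it\ell\log\phi)+O((\abs{\phibar}/\phi)^\ell)$ carries the same large-$\abs{t}$ caveat you flag, since the error is really $O(\min(\abs{t}(\abs{\phibar}/\phi)^\ell,1))$; as you observe, this is irrelevant for the application in Lemma~\ref{lemma:Mellin-Perron-integral}.
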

\begin{proof}
  Using the function $g(z,u)$ from the beginning of the proof of
  Lemma~\ref{lemma:central-approximation}, we have
  \begin{align*}
    g(z, u)&=\sum_{m\in\calF}
    \log\Bigl(\frac{1-(1-u)m^{-z}}{1-m^{-z}}\Bigr)
    =\sum_{m\in\calF}\sum_{k\ge 1}\frac{1-(1-u)^k}{k}m^{-kz}\\
    &=\sum_{m\in\calF}\sum_{k\ge 1}\frac{1-(1-u)^k}{k}m^{-kr}(\cos(kt\log
    m)-i\sin(k t\log m)).
  \end{align*}
  This implies that
  \begin{equation*}
    \log d(r, u) - \Re \log d(z, u) = \sum_{m\in\calF}\sum_{k\ge 1}\frac{1-(1-u)^k}{k}m^{-kr}(1-\cos(kt\log
    m)).
  \end{equation*}
  Obviously, all summands are non-negative, so we take the first summand as a
  lower bound and obtain
  \begin{equation*}
    \log d(r, u) - \Re \log d(z, u) \ge u\sum_{m\in\calF}m^{-r}(1-\cos(t\log m)).
  \end{equation*}
  For $\ell\ge 13$, we use the estimate
  $v_\ell=\frac{\phi^\ell-\phibar^\ell}{\phi-\phibar}\le K_0 \phi^\ell$ for a
  suitable $K_0>1$. In the following, we assume that $r<1$. If $13\le \ell\le 1/r$, then
  \begin{equation*}
    v_\ell^{r}\le  K_0^r\phi^{\ell r} \le K_0\phi.
  \end{equation*}

  Thus restricting the sum to those $\ell$ and the corresponding $v_\ell$ yields
  \begin{equation}\label{eq:medium-range-simpler-sum}
    \log d(r, u) - \Re \log d(z, u) \gg \sum_{13\le \ell\le 1/r}(1-\cos(t\log v_\ell)).
  \end{equation}
  We first consider the case that $\abs{t}\le r/\log\phi$. In this case, we have
  \begin{equation*}
    \abs{t}\log v_\ell\le \frac{r}{\log\phi} (\ell \log\phi + \log K_0) \le 1 + \frac{r\log K_0}{\log\phi}<\frac{\pi}{2}
  \end{equation*}
  for sufficiently small $r$. Thus we may use the inequality $1-\cos \theta=2\sin^2(\theta/2)\ge (4/\pi^2) \theta^2$
  (which is a consequence of concavity of $\sin$) to obtain
  \begin{equation*}
    \log d(r, u) - \Re \log d(z, u) \gg \sum_{13\le \ell\le 1/r}t^2\log^2
    v_\ell \gg t^2 \sum_{13\le \ell\le 1/r} \ell^2\gg \frac{t^2}{r^3}\ge \frac1{r^{1/5}}.
  \end{equation*}

  Next, we consider the case that $r/\log\phi\le\abs{t}\le r^{1/5}$. Here we omit
  all summands with $\ell> 1/(\abs{t}\log\phi)$ in \eqref{eq:medium-range-simpler-sum} and
  obtain
  \begin{align*}
    \log d(r, u) - \Re \log d(z, u) &\gg \sum_{13\le \ell\le
      1/(\abs{t}\log\phi)}(1-\cos(t\log v_\ell)) \\
    &\gg t^2\sum_{13\le \ell\le
      1/(\abs{t}\log\phi)} \log^2v_\ell \gg \frac{t^2}{\abs{t}^3}=\frac1{\abs{t}}\ge \frac{1}{r^{1/5}}
  \end{align*}
  by the same arguments as above.

  Finally, we turn to the case that $\abs{t}> r^{1/5}$.
  We estimate the sum of the cosines by a geometric sum:
  \begin{align*}
    \sum_{13\le \ell \le 1/r}\cos(t\log v_\ell) &=
    \Re \sum_{13\le \ell \le 1/r}\exp(it\log v_\ell) \\
    &\le
    \abs[\bigg]{ \sum_{13\le \ell \le 1/r}\exp\biggl(it\Bigl(\ell\log\phi - \log(\phi-\phibar) + O\Bigl(\Bigl(\frac{\abs{\phibar}}{\phi}\Bigr)^{\ell}\Bigr)\Bigr)\biggr)}\\
    &=
    \abs[\bigg]{ \sum_{13\le \ell \le 1/r}\Bigl(\exp(it\ell\log\phi) + O\Bigl(\Bigl(\frac{\abs{\phibar}}{\phi}\Bigr)^{\ell}\Bigr)\Bigl)}\\
    &\le \frac{2}{\abs{\exp(it\log\phi)-1}}+O(1).
  \end{align*}
  Choose an integer $k$ such that $\abs{t\log\phi -2k\pi}$ is minimal.
  Then $\abs{t\log\phi-2k\pi}\ge r^{3/4}\log\phi$ in view of the assumption made on $t$ in the statement of the lemma
  (if $k\neq 0$) and because $\abs{t}>r^{1/5}$ (if $k=0$). We therefore have $\abs{\exp(it\log\phi)-1}\gg
  r^{3/4}$. Combining this with \eqref{eq:medium-range-simpler-sum}, we
  conclude that
  \begin{equation*}
    \log d(r, u) - \Re \log d(z, u) \gg \frac1r-\frac1{r^{3/4}}\gg \frac1r,
  \end{equation*}
  as required.
\end{proof}

We are now able to estimate the integral in \eqref{eq:Mellin-Perron-1},
choosing $r$ appropriately.

\begin{lemma}\label{lemma:Mellin-Perron-integral}
  Let $u\in(1/2, 3/2)$ and $r=\sqrt{a(u)/\log x}$. Then
  \begin{equation}\label{eq:mellin-perron-integral}
    \frac1{2\pi i}\int_{r-i\infty}^{r+i\infty}\frac{d(z, u)}{z(z+1)}x^z\, dz =
  \frac{d(r, u)x^r}{2\pi}\frac{r^{1/2}\sqrt{\pi}}{\sqrt{a(u)}}(1+O(r^{1/5}))
  \end{equation}
  for $x\to\infty$.
\end{lemma}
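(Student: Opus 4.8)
The proof is a saddle-point evaluation of the contour integral in \eqref{eq:Mellin-Perron-1}. The plan is to split the line $\Re z=r$ into a central part $\abs{t}\le r^{7/5}$, where Lemma~\ref{lemma:central-approximation} provides a precise estimate for $d(z,u)$, and the tails $\abs{t}>r^{7/5}$, which Lemma~\ref{lemma:medium-range} shows to be negligible (up to the short intervals it excludes). The choice $r=\sqrt{a(u)/\log x}$ is dictated by the requirement $\log x=a(u)/r^{2}$: writing $z=r+it$, we have $x^{z}=x^{r}e^{it\log x}=x^{r}e^{ia(u)t/r^{2}}$, and this factor cancels exactly the term $\exp(-ia(u)t/r^{2})$ appearing in the estimate for $d(r+it,u)$ from Lemma~\ref{lemma:central-approximation}; this is what makes $z=r$ a saddle point. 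We record for later use the trivial bound $\abs{d(r+it,u)}=e^{\Re\log d(r+it,u)}\le d(r,u)$, valid for $u>0$ because each factor in \eqref{eq:d-factorisation} satisfies $\abs*{1+\frac{um^{-z}}{1-m^{-z}}}\le 1+\frac{um^{-r}}{1-m^{-r}}$; since moreover $\int_{-\infty}^{\infty}\frac{dt}{\abs{(r+it)(r+1+it)}}<\infty$, the contour integral converges absolutely.

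For the central part, Lemma~\ref{lemma:central-approximation} together with the cancellation above gives $d(r+it,u)\,x^{z}=d(r,u)\,x^{r}e^{-a(u)t^{2}/r^{3}}\bigl(1+O(r^{1/5})\bigr)$ uniformly for $\abs{t}\le r^{7/5}$ and for $u$ in compact subsets of $(1/2,3/2)$. Since $\abs{t}\le r^{7/5}\ll r$, one also has $\frac{1}{(r+it)(r+1+it)}=\frac1r\bigl(1+O(r^{2/5})\bigr)$. Hence the central part of the integral equals $\frac{d(r,u)x^{r}}{2\pi r}\bigl(1+O(r^{1/5})\bigr)\int_{-r^{7/5}}^{r^{7/5}}e^{-a(u)t^{2}/r^{3}}\,dt$. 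The substitution $w=t\sqrt{a(u)/r^{3}}$ turns the remaining integral into $\frac{r^{3/2}}{\sqrt{a(u)}}\int_{-A}^{A}e^{-w^{2}}\,dw$ with $A=\sqrt{a(u)}\,r^{-1/10}\to\infty$, and extending the integration to $(-\infty,\infty)$ costs only $O(e^{-a(u)r^{-1/5}})$; here we use that $a(u)=(\pi^{2}/6-\Li_{2}(1-u))/\log\phi$ is bounded away from $0$ on $(1/2,3/2)$ by continuity, so that $\int_{-A}^{A}e^{-w^{2}}\,dw=\sqrt{\pi}\,\bigl(1+O(r^{1/5})\bigr)$. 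This produces precisely the asserted main term $\frac{d(r,u)x^{r}}{2\pi}\frac{r^{1/2}\sqrt{\pi}}{\sqrt{a(u)}}\bigl(1+O(r^{1/5})\bigr)$.

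The main obstacle is the tail estimate, because Lemma~\ref{lemma:medium-range} does not control $t$ in the short intervals $I_{k}=\bigl(2k\pi/\log\phi-r^{3/4},\,2k\pi/\log\phi+r^{3/4}\bigr)$ with $k\ne 0$, so I would treat the two parts separately. Outside $\bigcup_{k\ne 0}I_{k}$, Lemma~\ref{lemma:medium-range} gives $\abs{d(r+it,u)}\le d(r,u)e^{-C/r^{1/5}}$ for some $C>0$; combined with the elementary estimate $\int_{\abs{t}\ge r^{7/5}}\frac{dt}{\abs{(r+it)(r+1+it)}}=O(\log(1/r))$ (split at $\abs{t}=1$, where the small-$t$ part is logarithmic and the large-$t$ part is $O(1)$ by $1/t^{2}$ decay), this portion of the integral is $O\bigl(d(r,u)x^{r}e^{-C/r^{1/5}}\log(1/r)\bigr)$, which is negligible. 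On $\bigcup_{k\ne 0}I_{k}$, I would use only the trivial bound $\abs{d(r+it,u)}\le d(r,u)$ from the first paragraph; since $\abs{t}\gg\abs{k}$ on $I_{k}$ for $r$ small, we have $\abs{(r+it)(r+1+it)}\gg k^{2}$, so $\sum_{k\ne 0}\int_{I_{k}}\frac{dt}{\abs{(r+it)(r+1+it)}}\ll r^{3/4}\sum_{k\ne 0}k^{-2}=O(r^{3/4})$, and this portion contributes $O\bigl(d(r,u)x^{r}r^{3/4}\bigr)$. Thus the whole tail contributes $O\bigl(d(r,u)x^{r}r^{3/4}\bigr)$, which is $O(r^{1/4})$, hence $O(r^{1/5})$, relative to the main term of order $d(r,u)x^{r}r^{1/2}$. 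Combining the central part with the tails yields the claimed formula, and one checks throughout that the implied constants are uniform for $u$ in compact subsets of $(1/2,3/2)$, as is needed for the subsequent applications.
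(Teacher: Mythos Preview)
Your proof is correct and follows essentially the same approach as the paper's own argument: the same three-way split into the central range $\abs{t}\le r^{7/5}$, the short intervals $I_k$ around non-zero multiples of $2\pi/\log\phi$ (handled by the trivial bound $\abs{d(z,u)}\le d(r,u)$), and the remaining tail (handled by Lemma~\ref{lemma:medium-range}), with the same Gaussian evaluation of the central part after the saddle-point cancellation. Your exposition even adds a couple of clarifying remarks (the explicit reason $r=\sqrt{a(u)/\log x}$ produces the cancellation, and the uniformity in $u$) that the paper leaves implicit.
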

\begin{proof}
We first compute the integral over
\begin{equation*}
  M_0=\{r+it \mid \abs{t}\le r^{7/5}\}.
\end{equation*}
For  $z\in M_0$, we have
\begin{equation*}
  \frac{1}{z}=\frac1r\Bigl(1+O\Bigl(\frac tr\Bigr)\Bigr)=\frac1r(1+O(r^{2/5})), \qquad
  \frac{1}{z+1}=1+O(r).
\end{equation*}
By Lemma~\ref{lemma:central-approximation}, we have
\begin{equation*}
  \frac{d(z, u) x^z}{z(z+1)} = \frac{d(r, u)x^r}{r}\exp\Bigl( -\frac{ia(u)t}{r^2} - \frac{a(u)t^2}{r^3}
  +it\log x + O(r^{1/5})\Bigr).
\end{equation*}
The value $r=\sqrt{a(u)/\log x}$ has been chosen in such a way that the linear terms
vanish. Thus we get
\begin{equation*}
  \frac{d(z, u) x^z}{z(z+1)} = \frac{d(r, u)x^r(1+O(r^{1/5}))}{r}\exp\Bigl( - \frac{a(u)t^2}{r^3}\Bigr).
\end{equation*}
We have
\begin{align*}
  \frac1{2\pi i}\int_{z\in M_0}\frac{d(z, u)}{z(z+1)}x^z\, dz &=
  \frac{d(r, u)x^r(1+O(r^{1/5}))}{2\pi r} \int_{-r^{7/5}}^{r^{7/5}} \exp\Bigl( -
  \frac{a(u)t^2}{r^3}\Bigr)\,dt\\
  &=\frac{d(r, u)x^r(1+O(r^{1/5}))}{2\pi r} \int_{-\infty}^\infty \exp\Bigl( -
  \frac{a(u)t^2}{r^3}\Bigr)\,dt
\end{align*}
because adding the tails induces an exponentially small error (note that
$a(u)>0$). Computing the integral yields
\begin{equation}\label{eq:integral-central-region}
  \frac1{2\pi i}\int_{z\in M_0}\frac{d(z, u)}{z(z+1)}x^z\, dz =
  \frac{d(r, u)x^r}{2\pi}\frac{r^{1/2}\sqrt{\pi}}{\sqrt{a(u)}}(1+O(r^{1/5})).
\end{equation}

Next, we compute the integral over
\begin{equation*}
  M_1 = \{r+it\in\C \mid \exists k\in\Z\setminus\{0\}\colon  \abs{t-2k\pi/\log \phi}<r^{3/4}\}.
\end{equation*}
We use the trivial bound $\abs{d(z, u)}\le d(r, u)$, which follows from the
definition of $d(z, u)$ as a Dirichlet series. Using the estimates
$\abs{z}\ge\abs{\Im z}$ and $\abs{z+1}\ge\abs{\Im z}$ yields
\begin{align*}
  \abs[\bigg]{\frac1{2\pi i}\int_{z\in M_1} \frac{d(z, u)}{z(z+1)}x^z\,dz}&\le
  \frac{d(r, u)x^r}{2\pi}\sum_{k\in\Z\setminus\{0\}}
  \frac{2r^{3/4}}{\bigl(\abs{\frac{2k\pi}{\log\phi}}-r^{3/4}\bigr)^2}\\
  &\ll \frac{d(r, u)x^r}{2\pi} r^{3/4}.
\end{align*}
Thus this integral can be absorbed by the error term of
\eqref{eq:integral-central-region}.

Finally, we compute the integral over
\begin{equation*}
  M_2:=\{r+it\in\C\} \setminus (M_0 \cup M_1).
\end{equation*}
For $z\in M_2$, we have
\begin{equation*}
  \abs{d(z, u)}\le d(r, u)\exp\Bigl(-\frac{K_1}{r^{1/5}}\Bigr)
\end{equation*}
for a suitable positive constant $K_1$ by Lemma~\ref{lemma:medium-range}. Thus
\begin{align*}
 \abs[\bigg]{\frac1{2\pi i}\int_{z\in M_2} \frac{d(z, u)}{z(z+1)}x^z\,dz} &\ll d(r, u)x^r\exp\Bigl(-\frac{K_1}{r^{1/5}}\Bigr) \int_{z \in M_2} \frac{1}{|z(z+1)|}\,d|z| \\
&\ll d(r, u)x^r\exp\Bigl(-\frac{K_1}{r^{1/5}}\Bigr) \log \frac1r.
\end{align*}
As this integral is also absorbed by the error term of
\eqref{eq:integral-central-region}, we get \eqref{eq:mellin-perron-integral}.
\end{proof}

In view of \eqref{eq:Mellin-Perron-1}, Lemma~\ref{lemma:Mellin-Perron-integral} immediately gives us
\begin{equation*}
  I_{\omega_\calF}(x, u) = \sum_{\substack{n\in\calM(\calF)\\n\le x}} u^{\omega_\calF(n)}\Bigl(1-\frac{n}{x}\Bigr) \sim \frac{d(r, u)x^r}{2\pi}\frac{r^{1/2}\sqrt{\pi}}{\sqrt{a(u)}}.
\end{equation*}
However, we are actually interested in an expression for the sum without the additional factor $(1-\frac{n}{x})$. This is achieved in the following lemma.

\begin{lemma}\label{lemma:gf-lower}
  We have
  \begin{multline*}
    \sum_{\substack{n\in\calM(\calF)\\n\le x}}u^{\omega_\calF(n)} =\\\frac{1}{2\sqrt\pi}\exp\Bigl(2\sqrt{a(u)}\sqrt{\log x}-\frac{2b+1}{4}\log\log
  x+\frac{2b-1}{4}\log
  a(u)
  + c(u)  \Bigr)\\
  \times\Bigl(1+O\Bigl(\frac{1}{(\log x)^{1/10}}\Bigr)\Bigr)
  \end{multline*}
  for $x\to\infty$ and $1/2<u<3/2$.
\end{lemma}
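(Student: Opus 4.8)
The plan is to recover the sum $\sum_{n \in \calM(\calF), n \le x} u^{\omega_\calF(n)}$ from the smoothed sum $I_{\omega_\calF}(x,u)$ of \eqref{eq:Mellin-Perron-1} by a standard de-smoothing (finite-difference) argument. Write $S(x) = S(x,u) = \sum_{n \in \calM(\calF), n \le x} u^{\omega_\calF(n)}$, so that $I_{\omega_\calF}(x,u) = \int_0^x S(t)\, \frac{dt}{x} \cdot \frac{1}{1}$ — more precisely $x\, I_{\omega_\calF}(x,u) = \int_0^x S(t)\,dt$, since $\sum_{n\le x} h(n)(1-n/x) = \frac1x\int_0^x \big(\sum_{n\le t} h(n)\big)\,dt$ for $h(n)=u^{\omega_\calF(n)}\iverson{n\in\calM(\calF)}$. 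Hence $S$ is essentially the derivative of $x \mapsto x\,I_{\omega_\calF}(x,u)$. Since $S$ is monotonically increasing in $x$ (as $u>0$ makes all summands positive), we can sandwich $S(x)$ between difference quotients: for any $y>0$,
\begin{equation*}
  \frac{(x+y)I_{\omega_\calF}(x+y,u) - x\,I_{\omega_\calF}(x,u)}{y} \ge S(x) \ge \frac{x\,I_{\omega_\calF}(x,u) - (x-y)I_{\omega_\calF}(x-y,u)}{y}.
\end{equation*}

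The main work is then to choose the step size $y$ and to estimate how $\Phi(x,u) := x\,I_{\omega_\calF}(x,u)$ changes when $x$ is perturbed by $y$. From Lemma~\ref{lemma:Mellin-Perron-integral} together with \eqref{eq:Mellin-Perron-1} and \eqref{eq:d-r-u-estimate} we have, with $r = \sqrt{a(u)/\log x}$,
\begin{equation*}
  I_{\omega_\calF}(x,u) = \frac{d(r,u)x^r}{2\pi}\frac{r^{1/2}\sqrt\pi}{\sqrt{a(u)}}\bigl(1+O(r^{1/5})\bigr),
\end{equation*}
and substituting $d(r,u) = \exp(a(u)/r + b\log r + c(u) + O(r))$ and $x^r = \exp(r\log x) = \exp(\sqrt{a(u)\log x}) = \exp(a(u)/r)$ gives an expression of the shape $\Phi(x,u) = x \cdot \exp\bigl(2\sqrt{a(u)\log x} + (b+\tfrac12)\log r + c(u) - \log(2\sqrt\pi\,a(u)^{1/2}\cdot a(u)^{-1/2}\cdot\ldots) + O(r^{1/5})\bigr)$; collecting the $\log r = \tfrac12\log a(u) - \tfrac12\log\log x$ contributions produces exactly the exponent $2\sqrt{a(u)}\sqrt{\log x} - \tfrac{2b+1}{4}\log\log x + \tfrac{2b-1}{4}\log a(u) + c(u)$ claimed in the lemma, up to the prefactor $\frac1{2\sqrt\pi}$ and an $O((\log x)^{-1/10})$ relative error. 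The key quantitative input is the logarithmic derivative: $\frac{\partial}{\partial x}\log\Phi(x,u) = \frac1x + \frac{\partial}{\partial x}\bigl(2\sqrt{a(u)\log x}\bigr) + \cdots = \frac1x\bigl(1 + \sqrt{a(u)/\log x}\,(1+o(1))\bigr) = \frac1x\bigl(1+O(1/\sqrt{\log x})\bigr)$, so that over an interval of length $y$ the relative change of $\Phi$ is $O(y/x)$ in the leading part, while the relative perturbation induced in $\exp(2\sqrt{a(u)\log(x\pm y)})$ is $O\bigl(\sqrt{\log x}\cdot y/x\bigr)$.

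Choosing $y = x/(\log x)^{1/2+\epsilon}$ balances the two sources of error: the difference quotient approximates the derivative of $\Phi$ with relative error $O(y\sqrt{\log x}/x) = O((\log x)^{-\epsilon})$ from the variation of the exponent, while the gap between $S(x)$ and the difference quotient — controlled by the monotonicity of $S$ and the fact that $\Phi' = S$ varies by a bounded factor on $[x-y,x+y]$ — is of the same order; taking $\epsilon = 1/10$ matches the stated error term $O((\log x)^{-1/10})$. Finally one differentiates the closed form for $\Phi$: since $\frac{d}{dx}\big(x\,\exp(g(\log x))\big) = \exp(g(\log x))\big(1 + g'(\log x)\big)$ and here $g'(\log x) = \sqrt{a(u)/\log x} + O(1/\log x) \to 0$, the derivative is asymptotically $\exp(g(\log x))$, i.e. $\Phi(x,u)/x$, which is precisely the right-hand side of the lemma. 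The main obstacle is bookkeeping: one must verify that the $O(r^{1/5}) = O((\log x)^{-1/10})$ error from Lemma~\ref{lemma:Mellin-Perron-integral} genuinely dominates (rather than being dominated by) the de-smoothing error, and that uniformity in $u \in (1/2,3/2)$ is preserved throughout — this follows because $a(u)$, $c(u)$ and all implied constants are continuous, hence bounded, on that compact interval, as already noted in Lemmas~\ref{lemma:central-approximation} and \ref{lemma:medium-range}.
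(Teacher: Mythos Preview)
Your approach---recovering $S(x)=\sum_{n\le x}u^{\omega_\calF(n)}$ from $\Phi(x)=xI_{\omega_\calF}(x,u)=\int_0^x S(t)\,dt$ by monotonicity and a finite-difference sandwich---is sound in principle, but the error analysis does not deliver the stated bound. The asymptotic from Lemma~\ref{lemma:Mellin-Perron-integral} gives $\Phi(x)=\Phi_0(x)\bigl(1+O((\log x)^{-1/10})\bigr)$, so each of $\Phi(x\pm y)$ carries an \emph{absolute} error $O(\Phi_0(x)(\log x)^{-1/10})$. After subtracting and dividing by $y$, this becomes a relative error of order $x/(y(\log x)^{1/10})$ against the main term $\Phi_0(x)/x$. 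With your choice $y=x/(\log x)^{1/2+\epsilon}$ this is $(\log x)^{2/5+\epsilon}$, which diverges. Optimising, one needs $y/x\asymp(\log x)^{-1/20}$, and then both the differencing error $O(y/x)$ and the propagated error $O(x/(y(\log x)^{1/10}))$ are $O((\log x)^{-1/20})$, strictly weaker than the lemma claims. (Incidentally, the perturbation of $\exp(2\sqrt{a(u)\log x})$ under $x\mapsto x\pm y$ is $O\bigl(y/(x\sqrt{\log x})\bigr)$, not $O(\sqrt{\log x}\cdot y/x)$ as you wrote.)

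The paper avoids this catastrophic cancellation entirely. It does not differentiate $\Phi$; instead it uses the direct inequalities
\[
I_{\omega_\calF}(x,u)\;\le\; S(x)\;\le\;\frac{I_{\omega_\calF}(x\log x,u)}{1-1/\log x},
\]
the first being trivial and the second following from $1-\frac{n}{x\log x}\ge 1-\frac{1}{\log x}$ for $n\le x$. The point is that a \emph{multiplicative} perturbation $x\mapsto x\log x$ changes $2\sqrt{a(u)\log x}$ by only $O(\log\log x/\sqrt{\log x})=o((\log x)^{-1/10})$, so both bounds have the same main term and the original error $O((\log x)^{-1/10})$ is preserved. Your differencing approach would recover the correct leading asymptotic, but to match the error term you would either need to sharpen the input from Lemma~\ref{lemma:Mellin-Perron-integral} or switch to the paper's sandwich, which is both shorter and sharper.
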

\begin{proof}
Trivially, the inequality
\begin{equation}\label{eq:gf-lower-bound}
 I_{\omega_\calF}(x, u) \le
    \sum_{\substack{n\in\calM(\calF)\\n\le x}} u^{\omega_\calF(n)}
\end{equation}
holds for positive $u$. On the other hand, we also have
\begin{equation}\label{eq:gf-upper-bound}
\begin{aligned}
  \frac{I_{\omega_\calF}(x\log x, u)}{1-\frac1{\log x}}&=
  \frac1{1-\frac1{\log x}} \sum_{\substack{n\in\calM(\calF)\\n\le x\log x}}
  u^{\omega_\calF(n)}\Bigl(1-\frac{n}{x\log x}\Bigr)\\
  &\ge
\frac1{1-\frac1{\log x}} \sum_{\substack{n\in\calM(\calF)\\n\le x}}
  u^{\omega_\calF(n)}\Bigl(1-\frac{n}{x\log x}\Bigr)\\
&\ge
\sum_{\substack{n\in\calM(\calF)\\n\le x}} u^{\omega_\calF(n)}
\end{aligned}
\end{equation}
for positive $u$ and $x>1$.

Now we choose $r=\sqrt{a(u)/\log x}$ as in Lemma~\ref{lemma:Mellin-Perron-integral}
and use \eqref{eq:Mellin-Perron-1} as well as Lemma~\ref{lemma:Mellin-Perron-integral} to obtain
\begin{equation*}
  I_{\omega_{\calF}}(x, u)=\frac{d(r, u)x^rr^{1/2}}{2\sqrt{\pi a(u)}}(1+O(r^{1/5})).
\end{equation*}
Finally, the asymptotic formula~\eqref{eq:d-r-u-estimate} for $d(r,u)$ gives us
\begin{align*}
  I_{\omega_{\calF}}(x, u)&=\frac{1}{2\sqrt\pi}\exp\Bigl( \frac{a(u)}{r}+b\log r
  + c(u) +r\log x + \frac12(\log r-\log a(u))\Bigr)\\&\qquad\qquad\times(1+O(r^{1/5}))\\
  &=\frac{1}{2\sqrt\pi}\exp\Bigl(2\sqrt{a(u)}\sqrt{\log x}-\frac{2b+1}{4}\log\log
  x+\frac{2b-1}{4}\log
  a(u)
  + c(u)  \Bigr)\\
  &\qquad\qquad\times\Bigl(1+O\Bigl(\frac{1}{(\log x)^{1/10}}\Bigr)\Bigr).
\end{align*}
If we replace $x$ by $x\log x$ and divide by $(1-1/\log x)$, we obtain
exactly the same asymptotic expansion, the difference being absorbed by the
error term. Combining this with \eqref{eq:gf-lower-bound} and
\eqref{eq:gf-upper-bound} yields the result.
\end{proof}

We are now able to prove Theorems~\ref{theorem:count} and \ref{theorem:asymptotic-normality}.

\begin{proof}[Proof of Theorem~\ref{theorem:count}]
  Setting $u=1$ in Lemma~\ref{lemma:gf-lower} yields the result.
\end{proof}

\begin{proof}[Proof of Theorem~\ref{theorem:asymptotic-normality}]
  We consider the moment generating function
  \begin{equation*}
    \E(e^{\omega_\calF(N)t}) = \frac{\sum_{\substack{n\in\calM(\calF)\\n\le
          x}}e^{\omega_\calF(n)t}}{\sum_{\substack{n\in\calM(\calF)\\n\le
          x}}1}.
  \end{equation*}
  Lemma~\ref{lemma:gf-lower} yields
  \begin{equation*}
    \E(e^{\omega_\calF(N)t})= \exp\bigl(2(\sqrt{a(e^t)}-\sqrt{a(1)})\sqrt{\log x}+O(t)\bigr)\Bigl(1+O\Bigl(\frac{1}{(\log x)^{1/10}}\Bigr)\Bigr)
  \end{equation*}
for $\log \frac12 < t < \log \frac32$.  We compute the Taylor expansion of $2(\sqrt{a(e^t)}-\sqrt{a(1)})$ around $0$ as
  \begin{equation*}
    2(\sqrt{a(e^t)}-\sqrt{a(1)}) = a_1 t + \frac{a_2}{2} t^2+O(t^3)
  \end{equation*}
  for the constants $a_1$, $a_2$ given in the theorem.
  Thus the moment generating function of the renormalised random variable
  $Z=(\omega_\calF(N)-a_1\log^{1/2} x)/(\sqrt{a_2}\log^{1/4}x)$ is
  \begin{equation*}
    \E(e^{Zt})=\exp\Bigl( \frac{t^2}2 +
    O\Bigl(\frac{t^3+t}{\log^{1/4}x}\Bigr) \Bigr)\Bigl(1+O\Bigl(\frac{1}{(\log x)^{1/10}}\Bigr)\Bigr).
  \end{equation*}
  For all real $t$, this moment generating function
  converges pointwise to the moment generating function $e^{t^2/2}$ of the
  standard normal distribution. By Curtiss' theorem~\cite{Curtiss:1942}, the
  random variable $Z$ converges weakly to the standard normal distribution for $x\to\infty$.
\end{proof}

\section{Proof of Theorem~\ref{theorem:distribution-Omega}: Counting with Multiplicities}

Let $r>0$ and consider the interval $U=U(r)=(\exp(-v_0r/2),  \exp(v_0r/2))$,
where $v_0$ has been defined in \eqref{eq:definition-v_0}.

For $u\in U$ and $\Re z>r/2$, we study the Dirichlet generating function
\begin{equation*}
    D(z, u)=\sum_{n\in\calM(\calF)}\frac{u^{\Omega_\calF(n)}}{n^z},
\end{equation*}
which has a product representation
\begin{equation*}
 D(z, u)=\prod_{m\in\calF}(1+um^{-z}+u^2m^{-2z}+\cdots) = \prod_{m\in\calF}\frac1{1-um^{-z}}
\end{equation*}
for all such $z$ and $u$.

\begin{lemma}\label{lemma:central-approximation-D}
  Let $r>0$, $\abs{t}\le r^{7/5}$ and $u\in U$. Then
  \begin{align}
   D(r+it, u) &= D(r, u)\exp\Bigl( -\frac{iAt}{r^2} - \frac{At^2}{r^3} +
   O(r^{1/5})\Bigr),\notag\\
   D(r, u) &= \exp\Bigl(\frac{A}{r} + B\Bigl(\frac{\log u}{r}\Bigr)\log r + C\Bigl(\frac{\log u}{r}\Bigr) +  O(r)\Bigr)\label{eq:D-r-u-estimate}
  \end{align}
  for $A=\pi^2/(6 \log\phi)$, $B(v)=-\Lambda(0, v)$, and $C(v)=\frac{\partial\Lambda(s, v)}{\partial s}\Bigr\rvert_{s=0}$ in a neighbourhood of the origin.
\end{lemma}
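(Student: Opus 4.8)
The plan is to repeat the argument of Lemma~\ref{lemma:central-approximation}, but with the dependence on $u$ moved from the amplitude of the relevant harmonic sum into a \emph{shift} of the underlying Dirichlet series. Write $G(z,u):=\log D(z,u)=-\sum_{m\in\calF}\log(1-um^{-z})$. One first checks that $v_0\le\log m$ for every $m\in\calF$ --- a short computation comparing $v_0$ with $\min_{m\in\calF_0}\log m$ and with $\log v_{13}(\phi,\phibar)$, using that $\calF_0$ contains $v_2$ or $v_3$ --- so that $\abs{um^{-z}}=m^{-\Re z}e^{\log u}<1$ whenever $\Re z>r/2$ and $u\in U(r)$; hence the product and the Dirichlet series defining $D(z,u)$ converge absolutely there, and $G$ is analytic. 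Now substitute $w:=\log u/z$. Since $(\log m-w)z=z\log m-\log u$, this turns $G$ into a genuine harmonic sum,
\begin{equation*}
G(z,u)=\Phi(z;w),\qquad \Phi(z;w):=\sum_{m\in\calF}\tilde F\bigl((\log m-w)z\bigr),\quad \tilde F(y):=-\log(1-e^{-y})=\sum_{j\ge1}\frac{e^{-jy}}{j},
\end{equation*}
and on the range under consideration $\abs{w}\le\abs{\log u}/\Re z<v_0/2$, so that $w$ stays in a fixed compact subset of $\{\abs w<v_0\}$, the domain treated in Lemma~\ref{lemma:Dirichlet-series}.

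Because $\tilde F^\star(s)=\Gamma(s)\zeta(s+1)$, the harmonic-sum formalism gives $\Phi^\star(s;w)=\Gamma(s)\zeta(s+1)\Lambda(s,w)$ (for $\Re s>1$ a priori). This is meromorphic with a simple pole at $s=1$ of residue $\Gamma(1)\zeta(2)/\log\phi=\pi^2/(6\log\phi)=A$ --- independent of $u$, since the residue of $\Lambda$ at $s=1$ does not depend on $w$ --- a double pole at $s=0$ (where $\Gamma(s)\zeta(s+1)=s^{-2}+O(1)$ meets the analytic factor $\Lambda(s,w)$, whose Taylor coefficients at $s=0$ are $\Lambda(0,w)$ and $\partial_s\Lambda(0,w)$), and simple poles at the negative integers. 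The factor $\Gamma(s)$ gives exponential decay on vertical lines, so \cite[Theorem~4]{Flajolet-Gourdon-Dumas:1995:mellin} applies for $\abs{\arg z}<\pi/4$ (cf.\ \cite{Flajolet-Prodinger:1986:regis}), uniformly for $w$ in compact sets; collecting the residues at $s=1$ and $s=0$ yields
\begin{equation*}
\Phi(z;w)=\frac Az+B(w)\log z+C(w)+O(z),\qquad z\to0,
\end{equation*}
with $B(w)=-\Lambda(0,w)$ and $C(w)=\partial_s\Lambda(s,w)\rvert_{s=0}$, exactly as in the statement. Specialising to $z=r$, $w=\log u/r$ is the second estimate \eqref{eq:D-r-u-estimate}.

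For the first estimate I would Taylor-expand $z\mapsto\Phi(z;\log u/z)$ about $z=r$. Applying $z\partial_z$ to $\Phi(z;w)$ multiplies its Mellin transform by $-s$, and the bootstrap used for \eqref{eq:first-derivative-g}--\eqref{eq:third-derivative-g} gives $\partial_z^k\Phi(z;w)=(-1)^k k!\,A\,z^{-k-1}+O(z^{-k})$, uniformly in $w$. For the chain rule I also need the $w$-derivatives: $\partial_w\Phi$ is $-z$ times the harmonic sum with kernel $\tilde F'$, $\partial_w^2\Phi$ is $z^2$ times the one with kernel $\tilde F''$, and so on, whose transforms are built from $\Gamma(s)\zeta(s)$, $\Gamma(s)\zeta(s-1),\dots$; a residue computation then shows $\partial_w\Phi(z;w)=-\log z/\log\phi+O(1)$, while all the mixed derivatives $\partial_z^j\partial_w^k\Phi$ entering the expansion are $O(z^{-j}\log(1/z))$ (this can also be read off from the expansion above, since $B$ is affine and $C$ analytic in $w$). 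As $w=\log u/z$ with $\abs{\log u}<v_0r/2$ we have $\abs{d^jw/dz^j}\ll r^{1-j}$, so the chain rule combined with these bounds gives at $z=r$
\begin{equation*}
\frac{d}{dz}G=-\frac{A}{r^2}+O\Bigl(\frac{\log(1/r)}{r}\Bigr),\qquad \frac{d^2}{dz^2}G=\frac{2A}{r^3}+O\Bigl(\frac{\log(1/r)}{r^2}\Bigr),\qquad \frac{d^3}{dz^3}G=O\Bigl(\frac1{r^4}\Bigr).
\end{equation*}
Since $\abs t\le r^{7/5}$, a second-order Taylor expansion with the cubic term kept in the remainder (bounded by $O(t^3/r^4)=O(r^{1/5})$, all remaining error contributions being $o(r^{1/5})$) produces $G(r+it,u)-G(r,u)=-iAt/r^2-At^2/r^3+O(r^{1/5})$, which is the first estimate.

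The step I expect to be the main obstacle is exactly this bookkeeping forced by the substitution $w=\log u/z$: although $w$ itself stays bounded, its $z$-derivatives grow like powers of $1/r$, and at each differentiation one must verify that these large factors are absorbed --- by the decay of the derivatives of $\tilde F$ (equivalently, by the milder pole structure of the twisted Mellin transforms $\Gamma(s)\zeta(s)\Lambda(s,w)$ and their analogues) together with the smallness $\abs t\le r^{7/5}$. Closely related is the requirement that every Mellin estimate be genuinely uniform for $w$ in compact subsets of $\{\abs w<v_0\}$, which is precisely the uniformity built into Lemma~\ref{lemma:Dirichlet-series}; and the elementary inequality $v_0\le\log m$ for $m\in\calF$, needed for the absolute convergence of $D(z,u)$ on the enlarged half-plane $\Re z>r/2$, should be recorded explicitly rather than taken for granted.
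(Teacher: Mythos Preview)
Your approach is essentially the same as the paper's: introduce the shift $v=(\log u)/z$, recognise $G(z,v)=-\sum_{m\in\calF}\log(1-e^{vz}m^{-z})$ as a harmonic sum with Mellin transform $\Gamma(s)\zeta(s+1)\Lambda(s,v)$, read off the simple pole at $s=1$ (residue $A$, independent of $v$) and the double pole at $s=0$ (giving $B(v)\log z+C(v)$), and then Taylor expand around $z=r$. The paper simply writes ``the rest of the proof follows along the lines of the proof of Lemma~\ref{lemma:central-approximation}'' for that last step; you have made explicit the chain-rule bookkeeping forced by $v=\log u/z$ varying with $z$, which the paper leaves to the reader, and your estimates for the mixed derivatives and the absorption of the $dw/dz$ factors are correct.
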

\begin{proof}
  Let $\Re z>r/2$ and $v=(\log u)/z$. The assumption $u\in U$ implies that $\abs{v}< rv_0/(2\abs
  z)< v_0$.

  We consider the sum
  \begin{equation*}
    G(z, v) = -\sum_{m\in\calF}\log(1-e^{vz}m^{-z}).
  \end{equation*}
  Expanding the logarithm yields
  \begin{equation*}
    G(z, v)=\sum_{m\in\calF}\sum_{k\ge 1}\frac{e^{kvz}m^{-kz}}{k} =
    \sum_{m\in\calF}\sum_{k\ge 1}\frac{\exp(-(\log m-v)kz)}{k}.
  \end{equation*}
  Its Mellin transform is
  \begin{equation*}
    G^\star(s, v) = \sum_{m\in\calF}\sum_{k\ge 1}
    \frac{1}{k^{1+s}}\frac1{(\log m-v)^s}\Gamma(s)=
    \Gamma(s)\zeta(1+s)\Lambda(s, v),
  \end{equation*}
  where $\Lambda$ has been defined in Lemma~\ref{lemma:Dirichlet-series}.

  Again, $G^\star(s, v)$ has a simple pole at $s=1$ and a double pole at $s=0$. At $s=1$,
  the local expansion is
  \begin{equation*}
    G^\star(s, v) =  \frac{A}{s-1} + O(1).
  \end{equation*}
  The local expansion around $s=0$ is
  \begin{equation*}
    G^\star(s, v) =  -\frac{B(v)}{s^2}+\frac{C(v)}{s} + O(1),
  \end{equation*}
with $B(v)$ and $C(v)$ as in the statement of the lemma.

  The rest of the proof follows along the lines of the proof of Lemma~\ref{lemma:central-approximation}.
\end{proof}

\begin{lemma}\label{lemma:D-medium-range}
  Let $r>0$ and $z=r+it$ with $\abs{t}\ge r^{7/5}$. Then
  \begin{equation*}
    \log D(r, u) - \Re \log D(z, u)\gg \frac1{r^{1/5}}
  \end{equation*}
  for $u\in (\exp(-v_0r/2), \exp(v_0r/2))$ and $r\to 0^+$ unless there is a non-zero integer $k$ such that
  $\abs{t-2k\pi/\log \phi}<r^{3/4}$.
\end{lemma}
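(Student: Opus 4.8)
The plan is to imitate the proof of Lemma~\ref{lemma:medium-range} almost verbatim: the quantity $\log D(r,u)-\Re\log D(z,u)$ will again be exhibited as a sum of non-negative terms whose leading ($k=1$) part is, up to a positive bounded factor, exactly the sum $\sum_{m\in\calF}m^{-r}(1-\cos(t\log m))$ that was estimated there.

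First I would expand the logarithm in the product representation of $D(z,u)$, which is valid for $\Re z=r>r/2$ and $u\in U$ as recorded at the beginning of this section. This gives
\begin{equation*}
  \log D(z,u)=-\sum_{m\in\calF}\log(1-um^{-z})=\sum_{m\in\calF}\sum_{k\ge 1}\frac{u^k}{k}\,m^{-kr}\bigl(\cos(kt\log m)-i\sin(kt\log m)\bigr),
\end{equation*}
and hence, taking real parts and subtracting from the value at $t=0$ (where $\log D(r,u)$ is real since each factor $1/(1-um^{-r})$ is a positive real),
\begin{equation*}
  \log D(r,u)-\Re\log D(z,u)=\sum_{m\in\calF}\sum_{k\ge 1}\frac{u^k}{k}\,m^{-kr}\bigl(1-\cos(kt\log m)\bigr).
\end{equation*}
Since $u>0$, every summand is non-negative. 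Keeping only the terms with $k=1$ and using that $u\ge\tfrac12$ once $r$ is small enough (because $U=U(r)\subset(1/2,3/2)$ as $r\to 0^+$), I obtain
\begin{equation*}
  \log D(r,u)-\Re\log D(z,u)\gg\sum_{m\in\calF}m^{-r}\bigl(1-\cos(t\log m)\bigr),
\end{equation*}
with an implied constant that is uniform in $u\in U(r)$.

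From this point the argument is literally the one used in the proof of Lemma~\ref{lemma:medium-range}: one restricts the sum to $13\le\ell\le 1/r$ using $v_\ell\le K_0\phi^\ell$, and then treats separately the three ranges $\abs{t}\le r/\log\phi$, $r/\log\phi\le\abs{t}\le r^{1/5}$ and $\abs{t}>r^{1/5}$. In the first two ranges one applies $1-\cos\theta\ge(4/\pi^2)\theta^2$ to a suitable sub-sum (in the first range the hypothesis $\abs{t}\ge r^{7/5}$ is precisely what turns $t^2/r^3$ into $\gg r^{-1/5}$); in the third range one bounds the sum of cosines by a geometric sum and invokes the hypothesis that $t$ is not within $r^{3/4}$ of a non-zero multiple of $2\pi/\log\phi$, obtaining $\gg 1/r$. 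Each case yields the claimed bound $\gg r^{-1/5}$.

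I do not expect a genuine obstacle here. The only point requiring a moment's care — and the sole difference from Lemma~\ref{lemma:medium-range} — is that the admissible set of $u$ now depends on $r$; but since $U(r)$ shrinks to the single point $1$ as $r\to 0^+$, the lower bound $u\gg 1$ needed to discard the factor $u$ is automatic and all implied constants can be chosen independent of $u$, so nothing beyond the three-case estimate above is needed.
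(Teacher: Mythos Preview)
Your proposal is correct and matches the paper's proof essentially verbatim: expand $\log D$ via the product representation, take real parts to get the non-negative double sum $\sum_{m,k}\frac{u^k}{k}m^{-kr}(1-\cos(kt\log m))$, keep only $k=1$, and then defer to the three-range argument of Lemma~\ref{lemma:medium-range}. Your remark that $U(r)$ shrinks to $\{1\}$ so that $u\gg 1$ uniformly is the only extra observation, and it is exactly the justification the paper leaves implicit.
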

\begin{proof}
  We have
  \begin{align*}
    \Re \log D(z, u)&=-\Re\sum_{m\in\calF}
    \log(1-um^{-z})
    =\Re\sum_{m\in\calF}\sum_{k\ge 1}\frac{u^k}{k}m^{-kz}\\
    &=\sum_{m\in\calF}\sum_{k\ge 1}\frac{u^k}{k}m^{-kr}\cos(kt\log
    m).
  \end{align*}
  This implies that
  \begin{align*}
    \log D(r, u) - \Re \log D(z, u) &= \sum_{m\in\calF}\sum_{k\ge
      1}\frac{u^k}{k}m^{-kr}(1-\cos(kt\log m))\\&\ge u\sum_{m\in\calF}m^{-r}(1-\cos(t\log m)).
  \end{align*}
  The remainder of the proof is exactly the same as that of
  Lemma~\ref{lemma:medium-range}.
\end{proof}

\begin{proof}[Proof of Theorem~\ref{theorem:distribution-Omega}]
We now consider asymptotic
expansions for $x\to\infty$; we set
$r=\sqrt{A/\log x}$. 
The statements and proofs of Lemmata \ref{lemma:Mellin-Perron-integral} and
\ref{lemma:gf-lower} carry over (only the range of $u$ has to be adapted). So we
have
\begin{multline}\label{eq:p-g-f-Omega}
  \sum_{\substack{n\in\calM(\calF)\\n\le x}}u^{\Omega_\calF(n)}\\
=\frac{1}{2\sqrt\pi}\exp\Bigl(2\sqrt{A}\sqrt{\log x}-\frac{2B(v)+1}{4}\log\log
  x+\frac{2B(v)-1}{4}\log
  A
  + C(v)  \Bigr)\\
  \times\Bigl(1+O\Bigl(\frac{1}{(\log x)^{1/10}}\Bigr)\Bigr)
\end{multline}
for $x\to\infty$, $u\in U(\sqrt{A/\log x})$ and $v=\log u\sqrt{\log x/A}$.

We now consider the moment generating function
\begin{equation*}
  \E(e^{\Omega_\calF(N)t}) = \frac{\sum_{\substack{n\in\calM(\calF)\\n\le
        x}}e^{\Omega_\calF(n)t}}{\sum_{\substack{n\in\calM(\calF)\\n\le
        x}}1}.
\end{equation*}
Equation~\eqref{eq:p-g-f-Omega} yields
\begin{multline*}
  \E(e^{\Omega_\calF(N)t}) = \exp\Bigl(\frac12 (\log \log x - \log A) (B(0)-B(v)) + C(v)-C(0)\Bigr)\\\times\Bigl(1+O\Bigl(\frac{1}{(\log x)^{1/10}}\Bigr)\Bigr),
\end{multline*}
where $v = t \sqrt{\log x}/\sqrt{A}$, for all $t$ such that $|t| \leq v_0\sqrt{A}/(2\sqrt{\log x})$.
Since $B(v)=-\Lambda(0, v)$, Equation~\eqref{eq:Lambda-0-formula}  gives us
$B(0) - B(v) = v/\log \phi$. Likewise,
Lemmata~\ref{lemma:central-approximation-D} and
\ref{lemma:Dirichlet-series} yield
\begin{equation*}
C(v) - C(0) = - \sum_{m\in \calF} \Big( \log \Big( 1 - \frac{v}{\log m} \Big) + \frac{v}{\log m} \Big) + \kappa_1v,
\end{equation*}
so finally
\begin{multline*}
  \E(e^{\Omega_\calF(N)t}) = \\
\exp\Bigl(t \Big(\frac{a_1}{2} \sqrt{\log x}\log\log x + b_1 \sqrt{\log x} \Big) \Bigr) \prod_{m \in \calF} e^{-v/(\log m)} \Big( 1 - \frac{v}{\log m} \Big)^{-1}\\
\times  \Bigl(1+O\Bigl(\frac{1}{(\log x)^{1/10}}\Bigr)\Bigr),
\end{multline*}
where $b_1 = A^{-1/2}(\kappa_1 - (\log A)/(2\log \phi))$. Since $(1-v/\lambda)^{-1}$ is exactly the moment generating function of an $\operatorname{Exp}(\lambda)$-distributed random variable, Theorem~\ref{theorem:distribution-Omega} follows immediately from Curtiss's theorem in the same way as Theorem~\ref{theorem:asymptotic-normality}.
\end{proof}

\bibliography{bib/cheub}
\bibliographystyle{amsplainurl}
\end{document}

